\newtheorem{thm}{Theorem}[section]
\newtheorem{lma}{Lemma}[section]
\newtheorem{pro}{Proposition}[section]
\theoremstyle{definition}
\theoremstyle{remark}
\numberwithin{equation}{section}
\def\f{\frac}
\def\hf1{^\f{1}{1-\xi^2}}
\def\be{\begin{equation}}
\def\en{\end{equation}}
\def\bs{\begin{split}}
\def\es{\end{split}}
\def\ba{\begin{align}}
\def\ea{\end{align}}
\author[Lin Chang]{L\MakeLowercase{in} C\MakeLowercase{hang}}
\address{School
of Mathematics   Science, Beihang University, Beijing, China}
\email{changlin23@buaa.edu.cn}
\renewcommand{\fancyhead}{}
\title{ S\MakeLowercase{tability} \MakeLowercase{of} \MakeLowercase{a}   C\MakeLowercase{omposite}
w\MakeLowercase{ave}  \MakeLowercase{of} t\MakeLowercase{wo}  s\MakeLowercase{eperate} s\MakeLowercase{trong}   v\MakeLowercase{iscous}  s\MakeLowercase{hock}  w\MakeLowercase{aves} \MakeLowercase{for}  1-D  i\MakeLowercase{sentropic}  n\MakeLowercase{avier}-s\MakeLowercase{tokes}  s\MakeLowercase{ystem}  }
\keywords{combination of two shock waves ; energy estimate; asymptotic stability}
\date{\today}
\begin{document}
\begin{abstract}
 In this paper,   the large time behavior of solutions   of 1-D isentropic Navier-Stokes system is  investigated. It is shown that a composite wave consisting of two viscous shock waves  is stable for the Cauchy problem provided that the two waves are  initially far away from each other. Moreover  the strengths of two   waves could be arbitrarily large.
\end{abstract}
\maketitle
\section{Introduction}
We consider the following  one-dimensional   isentropic Navier-Stokes system  for polytropic gas in the Lagrangian coordinate,
\begin{equation}\label{1.1}
\left\{ \begin{array}{ll}
&v_t-u_x=0,\quad \quad \quad\quad \quad \quad t>0, x\in \mathbb{R},\\
&u_t+p_x=(\mu(v)\frac{u_x}{v^{ }})_x,    \quad \quad  \, \,t>0, x\in \mathbb{R},
\end{array} \right.
\end{equation}
with the initial data:
\begin{equation}\label{1.2}
(v, u)(x, 0) = (v_0,u_0) (x)\longrightarrow (v_{\pm},u_{\pm}),\quad \text{as} \quad  x\rightarrow \infty.
\end{equation}
Here $v(x,t)=\frac{1}{\rho(x,t)}$ is the specific volume, $u(x,t)$   the fluid velocity, $p=a v^{-\gamma}$  the pressure with constant $a>0$, $\gamma> 1$  the adiabatic constant, and $\mu(v)=\mu_{0}v^{-\alpha}$   the viscosity coefficient with $\alpha\geq 0$. Without loss of generality, we assume $\mu_0=1$ in the rest of this article.
When the viscosity $\mu(v)\equiv 0$, the system (\ref{1.1}) becomes  the  Euler system
\begin{equation}
\left\{ \begin{array}{ll}\label{1.3}
&v_t-u_x=0,\\
&u_t+p_x=0.
\end{array} \right.
\end{equation}

It is known that the equation (\ref{1.3}) has rich  wave phenomena, such as shock and rarefaction.   The shock  is mollified  as the so-called viscous shock wave  when $\mu(v)>0$. The    time asymptotic  stability   of  single wave pattern   has been extensively studied in a large amount of literature since the pioneer works of \cite{g1986,mn1985}, see   \cite{fs1998,  hlz2017,km1985,l1997,lz2009,lz2015,m,mn1994,sx1993} and the reference therein, see other interesting works on the composite wave \cite{hm2009,hlm2010}.
However, most of above works require the strength of shock wave is   small, that is, the shock is weak. The stability of large amplitude shock (strong shock) is more interesting and challenging in both mathematics and physics.

Matsumura-Nishihara \cite{mn1985} showed that the viscous shock wave is stable if $ |v_+-v_-|<C(\gamma - 1)^{-1}$, that is, when $ \gamma  $ tends   to  $1  $, the strength of shock wave could be large. The condition is later relaxed to the condition that $ |v_+-v_-|<C(\gamma - 1)^{-2}$ in \cite{km1985}. The restriction on the strength of shock  was removed in  \cite{mw2010}   by a wonderful weighted energy method as $\alpha >\frac{\gamma-1} {2} $. Vasseur-Yao  \cite{vy2016} removed the condition $\alpha >\frac{\gamma-1} {2}$ by introducing an elegant variable transformation.  Moreover,  He-Huang  \cite{hh2020} extended the result of \cite{vy2016} to general pressure $p(v)$ and  viscosity $\mu(v)$, where $\mu(v)$ could be any positive smooth function.

It is important to study the stability of   composite wave consisting of at least two waves.  From \cite{hm2009},  it is not difficult to show the asymptotic stability of a composite wave consisting of 1-viscous shock wave and 2-viscous shock wave, provided that the strengths of the two shocks   satisfy  the condition ``small with same order".

In this paper, we study the asymptotic stability of this kind of composite wave with two large amplitude shock waves. More precisely, let $(V_1, U_1 )(x, t) $ be the 1-viscous shock wave connecting the left state $(v_-, u_- ) $ with an intermediate state $(v_m, u_m  ) $ and $(V_2, U_2 ) (x, t)$  be the 2-viscous shock wave connecting  $(v_m, u_m ) $ with the right state $(v_+, u_+ ) $ where the viscous shock waves are given in (\ref{2.2}) and (\ref{2.3}). The  intermediate state  $(v_m, u_m ) $  is determined by the RH condition, i.e.,
\begin{equation}\label{1.4}
\left\{\begin{array}{ll}
-s_{2}(v_+-v_m)-(u_+-u_m)=0, \\
-s_{2}(u_+-u_m)+(p(v_+)-p(v_m))=0,
\end{array}
\right.
\end{equation}
and
\begin{equation}\label{1.5}
\left\{\begin{array}{ll}
-s_{1}(v_m-v_-)-(u_m-u_-)=0, \\
-s_{1}(u_m-u_-)+(p(v_m)-p(v_-))=0.
\end{array}
\right.
\end{equation}
We denote the composite wave consisting of the two viscous shock waves $(V_{i}  ,U_{i}  ),$    $i = 1, 2$  by  $( V, U)(x, t) = (V_{1}  + V_{2} - v_{m},U_{1 }+U_{2}-u_{m} )$.

We outline the strategy as follows. In order to remove the condition ``small with same order", motivated by \cite{vy2016} and \cite{hh2020}, we introduce a new variable ${}{h}$, and formulate a new equation $\eqref{3.2}_2$ for ${}{h}$ in which the viscous term is moved to the mass equation $\eqref{3.2}_1$ such that the two nonlinear terms ${}{p}_x$ and $(\frac{{}{v}_x}{{}{v}^{\alpha+1}})_x $ are decoupled, so the interaction between nonlinear terms is weaken, and  the    low order   estimates are obtained. We then turn to the original system \eqref{1.1} to derive the higher order energy estimates, and finally complete the a priori estimates. On the other hand, since  the strengths of 1-shock wave and 2-shock  wave are arbitrarily large, the interaction between the two shocks is strong. We have to assume that 1-shock  wave is initially far away from 2-shock wave so that the interaction is weak.

The rest of the paper will be arranged as follows. In section \ref{section2}, the composite wave is formulated and the main result is stated.   In section \ref{section3}, the problem is reformulated by the anti-derivatives of the perturbations around the  composite wave. In section \ref{section4}, the a priori estimates are established. In section \ref{section5}, the main theorem is proved.

\noindent \textbf { Notation.}
The functional $\|\cdot\|_{L^p(\Omega)}$ is defined by $\| f\|_{L^p(\Omega)} =  (\int_{\Omega}|f|^{p}(\xi )\operatorname{ d }\xi )^{\frac{1}{p}}$. The symbol $\Omega$ is often omitted  when $\Omega=(-\infty,\infty)$. We denote for simplicity
\begin{equation*}
\| f\| =  \left(\int_{ -\infty}^{ \infty}f^{2}(\xi )\operatorname{ d }\xi \right)^{\frac{1}{2}}
\end{equation*}
as $p=2$. In addition, $H^m$ denotes the  $m$-th  order Sobolev space of functions defined by
 \begin{equation*}
\|f\|_{m} =  \left( \sum_{k=0}^{m}  \|\partial^{k}_{\xi}f\|^2 \right)^{\frac{1}{2}}.
\end{equation*}{}

\section{Preliminaries and Main Theorem }\label{section2}
\subsection{ Viscous Shock Profile}
Before stating the main results, we recall the Riemann problem for the   Euler equation (\ref{1.3}) with the Riemann initial data
\begin{equation}\label{Riemann}
(v,u)(x,0)=\left\{ \begin{array}{ll}
&(v_-,u_-),x<0,\\
&(v_+,u_+),x>0.\\
\end{array} \right.
\end{equation}
It is known that the system (\ref{1.3}) has two eigenvalues: $\lambda_{1}=-\sqrt{- p'  (v)}<0 $, $\lambda_{2}=-\lambda_{1}>0 $.  By the standard arguments (e.g. \cite{s1983}),     we define the shock curve  $S_{1}$ (resp $S_2$)\\
\begin{equation*}
 \left\{ \begin{array}{ll}
u=u_--\sqrt{(v_--v)(v^{-\gamma}-v_{-}^{-\gamma})}  ,u<u_{-} ,v<v_{-}  ,S_1,\\
u=u_--\sqrt{(v_--v)(v^{-\gamma}-v_{-}^{-\gamma})}  ,u<u_{-} ,v>v_{-}  ,S_2,\\
\end{array} \right.
\end{equation*}\\
and $SS(v_-,u_-)$:
\begin{eqnarray*}
SS(v_-,u_-)=     \{ (v,u)|  u \leq  u_{-} ;  S_{1}(u) < v <  S_{2}(u)  \}.
\end{eqnarray*}
In this paper, we assume that $(v_+,u_+)\in SS (v_{-},u_{-})$.    Thus the Riemann solution of (\ref{1.3}),(\ref{Riemann}) consists of two shock waves (and three constant states), that is,  there exists an intermediate state $\left(v_{m}, u_{m}{}\right)$, such that $(v_m,u_m)\in S_1(v_{-},u_{-})$ with the shock speed $s_{1}<0$, and $(v_{ {+}},u_{ {+}})\in S_2(v_{m},u_{m})$ with the shock speed $s_{2}>0.$ Here the shock speeds $s_{1}$ and $s_{2}$ are constants determined by the $\mathrm{RH}$ condition and satisfy entropy conditions
\begin{eqnarray}
\lambda_{1}\left(v_{-} {}\right)>s_{1}>\lambda_{1}\left(v_{m} {}\right), \lambda_{2}\left(v_{m} \right)>s_{2}>\lambda_{2}\left(v_{+} \right).
\end{eqnarray}
In what follows, we define $( \chi_{1}, \chi_{2} )$ below
\begin{eqnarray*}
\chi_{1}: = v_{-}-v_{m},\qquad \chi_{2}:= v_{+}-v_{m}.
\end{eqnarray*}
  We see that the   1-shock wave is a traveling wave solution of (\ref{1.1}) with the formula $(V_{1},U_{1} )(x-s_{1}t)$, satisfying
\begin{equation}\label{2.2}
\left\{ \begin{array}{ll}
&{-s_1}{V_1}'-{U_1}'=0,\\
&{-s_1}U_1'+p(V_1)'=(\frac{U_1'}{V_1^{\alpha+1}})',\\
&(V_{1},U_{1})(+\infty)=(v_m,u_{m}), \\
&(V_{1},U_{1})(-\infty)=(v_-,u_-),
\end{array} \right.
\end{equation}
where $'= {\operatorname{d}}/{\operatorname{d}\xi_{1}}, \quad \xi_1=x-s_1t .$ Similarly, the 2-viscous shock wave $(V_{2},U_{2} )(x -s_{2}t)$  satisfies
\begin{equation}\label{2.3}
\left\{ \begin{array}{ll}
&{-s_2}{V_2}'-{U_2}'=0,\\
&{-s_2}U_2'+p(V_2)'=(\frac{U_2'}{V_2^{\alpha+1}})',\\
&(V_{2},U_{2})(+\infty)=(v_{+},u_{+}),\\
&(V_{2},U_{2})(-\infty)=(v_m,u_m),
\end{array} \right.
\end{equation}
where $'= {\operatorname{d}}/{\operatorname{d}\xi_{2}}, \quad \xi_2=x-s_2t .$

\begin{lma}\label{lemma2.1} (\cite{km1985}) There are positive constants $C  $ and  $c_{1,2} ,$  such that
\begin{flalign*}
\begin{split}
&  (U_{i})_{x}\leq 0, \quad i= 1,2,   \\
&\left| V_{1}-v_{m}  \right|\leqslant C \chi_{1} \mathrm{e}^{-c_{1}\left|x-s_{1} t\right|}, \quad x>s_{1} t, \quad t \geqslant 0, \\
&\left| V_{2}-v_{m}  \right|\leqslant C  \chi_{2} \mathrm{e}^{-c_{2}\left|x-s_{2} t\right|}, \quad x<s_{2} t, \quad t \geqslant 0.
\end{split}
\end{flalign*}
\end{lma}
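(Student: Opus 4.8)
The plan is to reduce each traveling-wave system to a scalar first-order autonomous ODE and to read off the monotonicity and decay from a phase-line analysis. First I would treat the 1-shock. Using the first equation in \eqref{2.2} to write $U_1'=-s_1 V_1'$ and substituting into the second, the momentum equation becomes $s_1^2 V_1'+p(V_1)'=-s_1(V_1'/V_1^{\a+1})'$. Integrating once from $-\infty$ and using $(V_1,U_1)(-\infty)=(v_-,u_-)$ together with $V_1'(-\infty)=0$ yields
\be
-s_1\f{V_1'}{V_1^{\a+1}}=s_1^2(V_1-v_-)+\big(p(V_1)-p(v_-)\big)=:g_1(V_1),
\en
so that $V_1'=-s_1^{-1}V_1^{\a+1}g_1(V_1)$.

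Next I would analyze $g_1$ on $[v_m,v_-]$. The Rankine--Hugoniot relations \eqref{1.5} give exactly $g_1(v_-)=g_1(v_m)=0$, while $g_1'(v)=s_1^2+p'(v)=s_1^2-\lambda_1(v)^2$. The entropy condition $\lambda_1(v_-)>s_1>\lambda_1(v_m)$ (all three quantities negative) then forces $g_1'(v_-)>0>g_1'(v_m)$, so $g_1$ is strictly negative on the open interval $(v_m,v_-)$. Since $s_1<0$, this shows $V_1'<0$ throughout, hence $V_1$ decreases monotonically from $v_-$ to $v_m$ and $U_1'=-s_1V_1'<0$, which is the asserted sign $(U_1)_x\le0$. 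For the decay I would linearize at the rest point $v_m$: writing $w=V_1-v_m$ and using $g_1(V_1)=g_1'(v_m)w+O(w^2)$ gives $w'=-c_1 w+O(w^2)$ with $c_1=v_m^{\a+1}|g_1'(v_m)|/|s_1|>0$. A comparison (or Gronwall) argument on the stable manifold then upgrades this to the uniform bound $|V_1-v_m|\le C\chi_1 e^{-c_1\xi_1}$ valid for $\xi_1=x-s_1t>0$, the side on which $V_1$ approaches $v_m$.

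The 2-shock is entirely parallel. The same elimination and a single integration from $+\infty$ give $V_2'=-s_2^{-1}V_2^{\a+1}g_2(V_2)$ with $g_2(V)=s_2^2(V-v_+)+(p(V)-p(v_+))$; by \eqref{1.4} one has $g_2(v_m)=g_2(v_+)=0$, and the entropy condition $\lambda_2(v_m)>s_2>\lambda_2(v_+)$ yields $g_2'(v_m)<0<g_2'(v_+)$, so $g_2<0$ on $(v_m,v_+)$. Because now $s_2>0$, this gives $V_2'>0$ (so $V_2$ increases from $v_m$ to $v_+$) and $U_2'=-s_2V_2'<0$. Linearizing at $v_m$ produces $w'=c_2 w+O(w^2)$ with $c_2>0$, and since here $v_m$ is attained as $\xi_2=x-s_2t\to-\infty$, integration gives $|V_2-v_m|\le C\chi_2 e^{-c_2|\xi_2|}$ for $\xi_2<0$.

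The genuinely delicate step is the passage from the linearized decay rate to a uniform exponential bound with the stated amplitude factor $\chi_i$. This requires controlling the quadratic remainder $O(w^2)$ along the whole trajectory, not merely near the rest state; I would handle it with a barrier/comparison argument showing that the trajectory stays in a region where $g_i(V_i)/(V_i-v_m)$ is bounded above and below by positive constants, which pins both the rate $c_i$ and the prefactor proportional to the shock strength $\chi_i=|v_\pm-v_m|$. The remaining ingredients---existence and uniqueness of the profiles---are already supplied by the cited construction, so the work is confined to these sign and decay estimates.
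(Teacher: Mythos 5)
The paper never proves this lemma---it is quoted directly from Kawashima--Matsumura \cite{km1985}---so your attempt should be measured against the standard profile construction, and it is essentially that construction: eliminate $U_i$ via $U_i'=-s_iV_i'$, integrate once, and run a phase-line analysis on $V_i'=-s_i^{-1}V_i^{\a+1}g_i(V_i)$. Your reduction, the identification of the zeros of $g_i$ with the RH conditions \eqref{1.4}--\eqref{1.5}, the sign bookkeeping from the entropy inequalities, and the conclusions $(U_i)_x\le 0$ and exponential decay toward $v_m$ are all correct. Two points need tightening. (i) The inference ``$g_1'(v_-)>0>g_1'(v_m)$, hence $g_1<0$ on $(v_m,v_-)$'' does not follow from the endpoint derivative signs alone; you need strict convexity, $g_1''=p''>0$ (true for $p=av^{-\gamma}$, $\gamma>1$), so that $g_1$, having zeros at $v_m$ and $v_-$, is strictly negative between them. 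Without this, $g_1$ could vanish inside the interval, creating extra rest points and destroying both the monotonicity and the heteroclinic connection. (ii) Your barrier step is stated too strongly: the ratio $g_1(V)/(V-v_m)$ is negative and is \emph{not} bounded away from zero on all of $(v_m,v_-)$, since it vanishes at $V=v_-$; the same happens for $g_2$ at $v_+$. The uniform bound must be run only on the half-line where decay is claimed: by monotonicity, for $\xi_1\ge 0$ the trajectory lies in $(v_m,V_1(0)]$ with $V_1(0)<v_-$, and on this compact set $|g_1(V)|/(V-v_m)$ has a positive lower bound $\kappa$, whence $(V_1-v_m)'\le -c_1(V_1-v_m)$ with $c_1=\kappa v_m^{\a+1}/|s_1|$, giving $0<V_1-v_m\le (V_1(0)-v_m)e^{-c_1\xi_1}\le \chi_1 e^{-c_1\xi_1}$; symmetrically on $\xi_2\le 0$ for the $2$-shock. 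With (i) and (ii) inserted, your proof is complete and coincides with the argument behind the cited result.
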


\subsection{Location of the Shift $\beta_{1}$ and $\beta_{2}$ }
As mentioned before, we assume that the 2-viscous shock wave is initially far away from the 1-viscous shock profile, that is, the shock profile is  $(V_2,U_2)(x-\beta)$ with some constant $\beta>0$ as $t=0$. The two shocks formulate a composite wave by $V(x)=V_{1}(x )+V_{2}(x -\beta_{ })-v_{m}, U(x)=U_{1}(x )+U_{2}(x  -\beta_{ })-u_{m}$. We
consider the situation where the initial data $({}{v}_0,{}{ u}_0)(x)$ is given in a neighborhood of $(V,U)(x)$. The solution is expected to tend to the composite wave
 \begin{align}\label{2.4}
\begin{split}
&V(x,t;\beta_1,\beta_2;\beta)=V_{1}(x-s_{1}t+ \beta_{1})+V_{2}(x -s_{2}t-\beta_{ }+\beta_{2})-v_{m},   \\ &U(x,t;\beta_1,\beta_2;\beta)=U_{1}(x-s_{1}t+ \beta_{1})+U_{2}(x -s_{2}t-\beta_{ }+\beta_{2})-u_{m},
\end{split}
 \end{align}
where the shifts $\beta_1$ and $\beta_2$ are supposed to satisfy
\begin{align*}
\begin{split}
0=&\int_{-\infty}^{\infty}\left(
\begin{array}{cccc}
{}{v}_0(x) -    V(x,0;\beta_{1}, \beta_{2};\beta)  \\
{}{u}_0(x) -    U(x,0;\beta_{1}, \beta_{2};\beta)
\end{array}
\right )
\operatorname{d}x:=\left(\begin{array}{cccc}
I_1(\beta_1,\beta_2;\beta)  \\
I_2(\beta_1,\beta_2;\beta)
\end{array}
\right ).
\end{split}&
\end{align*}
We shall find unique $\beta_1$ and $\beta_2$ such that $I_i(\beta_1,\beta_2;\beta)=0, i=1,2.$
Note that
\begin{align}\label{2.5}
\begin{split}
&I_1(\beta_1,\beta_2;\beta)\\
=&\int_{-\infty}^{\infty}
{}{v}_0(x) -    V(x,0;0,0;\beta)+ V(x,0;0,0;\beta) -   V(x,0;\beta_{1}, \beta_{2};\beta)  \operatorname{d}x\\
=&I_{01}+\int_{-\infty}^{\infty} V_1(x ) -   V_1(x +\beta_1)  \operatorname{d}x\\
&+\int_{-\infty}^{\infty} V_2(x-\beta) -   V_2(x-\beta+\beta_2) \operatorname{d}x\\
=&I_{01}-\beta_1(v_m-v_-)-\beta_2(v_+-v_m),
\end{split}
\end{align}
where
\begin{equation}\label{2.6}
I_{01}=\int_{-\infty}^{\infty}
{}{v}_0(x) -    V(x,0;0,0;\beta)    \operatorname{d}x.
\end{equation}
Similarly one can get
\begin{align}
I_2(\beta_1,\beta_2;\beta)=I_{02}-\beta_1(u_m-u_-)-\beta_2(u_+-u_m),
\end{align}
where
\begin{equation}\label{2.8}
I_{02}=\int_{-\infty}^{\infty}
{}{u}_0(x) -    U(x,0;0,0;\beta)   \operatorname{d}x.
\end{equation}
Utilizing (\ref{2.5})-(\ref{2.8}), R-H condition (\ref{1.4})-(\ref{1.5}),  we have
\begin{align*}
\begin{split}
\left(
\begin{array}{cccc}
I_{01}  \\
I_{02}
\end{array}
\right )=  -\beta_{1}\left(
\begin{array}{cccc}
v_--v_{m}   \\
u_{-}-u_{m} \\
\end{array}
\right )-
\beta_{2}\left(
\begin{array}{cccc}
v_m-v_{+}   \\
u_m-u_{+} \\
\end{array}
\right ).\\
\end{split}&
\end{align*}
Thus, one gets
\begin{eqnarray}\label{2.9}
\beta_{1}    =\frac{I_{01}s_{2}+I_{02}}{\chi_{1}(s_{1}-s_2)  }, \quad \beta_{2} =\frac{I_{01}s_{1}+I_{02}}{\chi_{2}(s_{1}-s_2) }.
\end{eqnarray}

\subsection{Main Theorem }
To state the main theorem, we assume
\begin{flalign}\label{2.10}
\begin{split}
&v_{0}(x)-V(x,0;0,0;\beta)\in \mathbb{H}^1   \cap \mathbb{L}^1 \quad u_{0}(x)-U(x,0;0,0;\beta)\in \mathbb{H}^1    \cap \mathbb{L}^1.
\end{split}
 \end{flalign}
 Then we can define
\begin{eqnarray}
(\phi_{0},\psi_0)(x)= \int_{-\infty}^{x} [ v_{0}(y )-V(y,0;\beta_{1},\beta_{2};\beta)  ,  u_{0}(y )-U(y,0;\beta_{1},\beta_{2};\beta)    ] \operatorname{d}y.
\end{eqnarray}
In view of $I_i(\beta_1,\beta_2;\beta)=0, i=1,2$,  we  further assume that
\begin{eqnarray}\label{2.12}
(\phi_{0},\psi_0) \in \mathbb{L}^2.
\end{eqnarray}
We abbreviate $(V(x,t;\beta_{1},\beta_{2};\beta),U(x,t;\beta_{1},\beta_{2};\beta)) \ \text{as}\  (V ,U )$  in the rest part of this paper.
We are ready to state the main result.
\begin{thm}\label{theorem}
Suppose    (\ref{2.10})-(\ref{2.12}) hold and  {$(v_+,u_+)\in SS (v_{-},u_{-})$}.  There exists a positive constant   $\delta_0$, such that if $$\|\phi_{0}\|_{2}+\|\psi_{0}\|_{2}+\beta^{-1}\leq\delta_{0},$$ then the Cauchy problem (\ref{1.1}),(\ref{1.2})   has a unique global solution $(v,u)(x,t) $ satisfying
\begin{align}
\begin{split}
&( {v}-V,{u}-U)\in C^0([0,+\infty);\mathbb{H}^2), {v}-V \in \mathbb{L}^2( 0,+\infty ;\mathbb{H}^3),\\
&{u}-U\in \mathbb{L}^2( 0,+\infty ;\mathbb{H}^2).
\end{split}\label{2.13}
\end{align}
Moreover,
\begin{eqnarray}
\sup_{x\in \mathbb{R}_{ }}                       | {v }-V  |     \rightarrow 0 , \quad \sup_{x\in \mathbb{R}_{ }}                       | {u}-U  |     \rightarrow 0,    \text{   as } t\rightarrow +\infty.
\label{2.14}\end{eqnarray}
\end{thm}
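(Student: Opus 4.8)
The plan is the classical continuation argument: couple a short-time existence result with uniform a priori estimates to produce a global solution, and then extract the decay \eqref{2.14}. First I would pass to the anti-derivative variables $(\phi,\psi)$ with $\phi_x=v-V$ and $\psi_x=u-U$, and integrate the conservation form of \eqref{1.1} once in $x$. Since each profile $(V_i,U_i)$ solves \eqref{2.2}--\eqref{2.3} exactly but the composite $(V,U)$ does not, the $(\phi,\psi)$-system acquires an inhomogeneous interaction term, say $G$, measuring the mismatch of $(V,U)$ in \eqref{1.1}. By Lemma \ref{lemma2.1} this term is essentially a product of the two exponentially localized profiles, centered at $x=s_1t$ and $x=s_2t+\beta$; because $s_1<0<s_2$ the supports only drift apart, so $G$ is controlled uniformly in time by the initial separation, in particular by $\beta^{-1}$.

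To accommodate arbitrarily large shock strengths I would use the decoupling of Section \ref{section3}: introduce the effective velocity
\be\label{eff-vel}
h:=u-\f{v_x}{v^{\alpha+1}},\qquad\text{so that}\qquad v_t-h_x=\Big(\f{v_x}{v^{\alpha+1}}\Big)_x,\quad h_t+p(v)_x=0 .
\en
This moves all the viscosity into the mass equation and leaves the pressure nonlinearity free of dissipation. The low-order estimate is then carried out on the perturbation of \eqref{eff-vel}: testing against the natural relative-energy functional, the parabolic term of the $v$-equation supplies the dissipation of $\phi_{xx}$, while the favourable sign $p'(v)<0$ controls the pressure contribution coming from the $h$-equation, so that no bound on $|v_+-v_-|$ is needed. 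The monotonicity $(U_i)_x\le0$ from Lemma \ref{lemma2.1} is what closes this zeroth-order bound.

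Next I would return to the original system \eqref{1.1} to perform the higher-order energy estimates, differentiating the equations, pairing with the appropriate derivatives of $(\phi,\psi)$, and absorbing the cross terms by the low-order dissipation together with the interpolation $\|f\|_{L^\infty}^2\le C\|f\|\,\|f_x\|$. Adding the estimates over all orders produces, for a small enough $\delta_0$ and as long as the solution norm plus $\beta^{-1}$ stays below $\delta_0$,
\be\label{apriori-sketch}
\|\phi(t)\|_{2}^2+\|\psi(t)\|_{2}^2+\int_0^t\!\Big(\|(v-V)_x\|_{1}^2+\|(u-U)_x\|_{1}^2\Big)ds\le C\Big(\|\phi_0\|_{2}^2+\|\psi_0\|_{2}^2+\beta^{-1}\Big),
\en
together with the one-higher-derivative bounds giving the regularity class \eqref{2.13}. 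Combining \eqref{apriori-sketch} with local existence in the usual open--closed argument yields the unique global solution.

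For the decay \eqref{2.14}, estimate \eqref{apriori-sketch} makes $\int_0^\infty(\|(v-V)_x\|^2+\|(u-U)_x\|^2)ds<\infty$ while the equations bound the corresponding time derivatives; hence $\|(v-V)_x(t)\|+\|(u-U)_x(t)\|\to0$, and since $\|(v-V)(t)\|$ and $\|(u-U)(t)\|$ stay bounded, the same interpolation gives \eqref{2.14}. The step I expect to be the main obstacle is the low-order estimate for the two strong waves at once: the transformation \eqref{eff-vel} is what tames each single large shock, but one must still verify that $G$ and the cross terms linking the $V_1$- and $V_2$-regions do not spoil the relative-energy structure, which is exactly why the quantitative separation $\beta^{-1}\le\delta_0$ is imposed.
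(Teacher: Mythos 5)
Your proposal follows essentially the same route as the paper: anti-derivative perturbations, the effective velocity $h=u-v_x/v^{\alpha+1}$ to move the viscosity into the mass equation at low order (with positivity of $-1/p'(V)$ and the shock monotonicity $(U_i)_x\le 0$ supplying the good-signed terms), a return to the original system for the higher-order estimates, a continuation argument, and decay via time-integrability plus the interpolation $\|f\|_{L^\infty}^2\le C\|f\|\,\|f_x\|$. The one loose statement is that the interaction term is ``controlled uniformly in time by $\beta^{-1}$'': a merely uniform-in-time bound would let the time integrals of the interaction terms grow linearly and the a priori estimate would not close, so what is actually needed (and what the drifting-supports mechanism you describe indeed yields, cf.\ Lemma \ref{lemma4.2}) is the exponentially decaying bound $\|W\|_{2}\le C e^{-C_-\beta}e^{-c't}$, whose time integral is $O(e^{-C_-\beta})$.
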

\section{Reformulation of the Original Problem }\label{section3}
Set
 \begin{align}\label{3.1}
 \begin{split}
\phi&( x,t):=\int^x_ {-\infty}{}{(v-V)}(y,t)  \operatorname{d}y,  \\
  \psi&( x,t):=\int^x_{-\infty} {}{(u-U)}(y,t)  \operatorname{d}y,
\end{split}
 \end{align}
which means that we look for the solution $(v,u)(x,t)$ in the form
\begin{align}\label{t3.2}
v&( x,t)=\phi_x (x,t)+  V(x,t;\beta_{1},\beta_{2};\beta), \nonumber\\
u&( x,t)=\psi_x (x,t)+  U(x,t;\beta_{1},\beta_{2};\beta).
\end{align}
From (\ref{2.2})-(\ref{2.4}), we know the shock profile $(V,U)$ satisfies
\begin{equation}
\left\{ \begin{array}{ll}
&{V}_{t}-{U}_{x}=0,\\
&U_{t}+p(V)_{x}-\left(\frac{U_{x}}{V^{\alpha+1}}\right)_{x}={}{W}_{x},\\
&(V,U)(\pm \infty,t;\beta_{1},\beta_{2};\beta)=(v_{\pm}, u_{\pm}),
\end{array} \right.\label{3.2}
\end{equation}
where
\begin{equation*}
W=  \frac{U_{2x}}{V_2^{\alpha+1}}  + \frac{U_{1x}}{V_{1}^{\alpha+1}}  - \frac{U_{x}}{V^{\alpha+1}}+p(V)+p(v_m)-p(V_{1})-p(V_2).
\end{equation*}
Motivated by  \cite{mm1999}, substitute (\ref{t3.2}) into (\ref{1.1}) and integrate the resulting system with respect to $x$,  we have
\begin{equation}
\left\{ \begin{array}{ll}
&\phi_{t}-\psi_{x}  =0, \\
&\psi_{t}-f(V,U_x) \phi_{x}-\frac{{\psi_{x x}}}{V^{\alpha+1}} =F-W ,
\end{array} \right.  \label{3.3}
\end{equation}
 with the initial condition:
\begin{align}\label{3.4}
\left(\phi_{0}, \psi_{0}\right)&(x) \in \mathbb{H}^{2}, \quad x \in \mathbb{R},
\end{align}
where
\begin{align}\label{3.5}
f(V,U_x)= -p^{\prime}(V)    - (\alpha+1)\frac{{} U_{x}}{V^{\alpha+2}} >0,
\end{align}
\begin{align}\label{3.6}
F&= \frac{{}{u}_{x}}{{}{v}^{\alpha+1}}-\frac{U_{x}}{V^{\alpha+1}} - \frac{\psi_{xx}}{V^{\alpha+1}}+(\alpha+1)\frac{U_x \phi_x }{V^{\alpha+2}}-\left[p({}{v})-p(V)-p'(V)\phi_x\right].
\end{align}
We will seek the solution in the functional space $\mathbb{X}_{\delta}(0,T)$ for any $0\leq T < +\infty $,
\begin{align*}
\begin{split}
\mathbb{X}_{\delta}(0,T):=&\left\{ (\phi,\psi)\in C ([0,T];\mathbb{H}^2)|\phi_{x} \in  \mathbb{L}^2(0,T;\mathbb{H}^1) ,\psi_{x} \in  \mathbb{L}^2(0,T;\mathbb{H}^2)\right. \\
&  \sup_{0\leq t\leq T}\|(\phi, \psi)(t)\|_2\leq\delta    \},
\end{split}&
\end{align*}
 where ${ \delta} \ll 1$ is small.
\begin{pro}\label{prposition3.1} (A priori estimate)
Suppose  that $(\phi,\psi) \in \mathbb{X}_{\delta}(0,T)$ is the solution of  (\ref{3.3}), (\ref{3.4})  for some time $T>0$. There exists a positive constant $\delta_0  $ independent of  $T$, such that if
$$ \sup_{0\leq t\leq T}\|(\phi, \psi)(t)\|_{{2}} \leq \delta_{ }\leq \delta_{0},$$   for $t \in [0,T]$,
then
\begin{eqnarray*}
\|(\phi, \psi)(t)\|_{{2}}^{2}+    \int_{0}^t     (  \|  \phi_{x}(t)  \|^2_{1}  +   \|\psi_{x}(t)\|_{2}^2   )    \operatorname{d}t       \leq C_{0} ( \|(\phi_{0},\psi_{0})\|_{{2}}^2+   e^{-C_-\beta}),
\end{eqnarray*}
where $C_{0}  >1$ and $C_{-} $ are two positive constants independent of $T$.
\end{pro}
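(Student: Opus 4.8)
The plan is to close \eqref{3.3} through a hierarchy of weighted energy estimates on the anti-derivative variables $(\phi,\psi)$, combining the Matsumura--Nishihara anti-derivative method with the effective-velocity decoupling of \cite{vy2016,hh2020} that dispenses with any bound on the shock strengths $\chi_1,\chi_2$. Since $\big(\frac{u_x}{v^{\alpha+1}}\big)_x=\big(\frac{v_x}{v^{\alpha+1}}\big)_t$, setting $h=u-\frac{v_x}{v^{\alpha+1}}$ rewrites \eqref{1.1} so that the continuity equation becomes parabolic, $v_t-h_x=\big(\frac{v_x}{v^{\alpha+1}}\big)_x$, while the momentum equation $h_t+p(v)_x=0$ is inviscid; the corresponding profile $H=U-\frac{V_x}{V^{\alpha+1}}$ obeys the same relations up to the interaction error. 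In this formulation the pressure nonlinearity sits in the inviscid equation $\eqref{3.2}_2$ and the viscous nonlinearity $\big(\frac{v_x}{v^{\alpha+1}}\big)_x$ is transferred to the equation $\eqref{3.2}_1$, so that the two nonlinear mechanisms are decoupled and the diffusive dissipation of $\phi_x$ and $\phi_{xx}$ can be produced directly from the (now parabolic) equation for $\phi$, without invoking smallness of $|v_+-v_-|$.

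First I would establish the lowest-order estimate. Testing the reformulated momentum equation against $\psi$ and using $\phi_t=\psi_x$, the cross term $-\int f(V,U_x)\phi_x\psi\,dx$ is integrated by parts into $\frac12\frac{d}{dt}\int f\phi^2\,dx$ plus a remainder carrying the profile derivatives $V_{ix},U_{ix}$; by the monotonicity $U_{ix}\le0$ and the entropy conditions of Lemma \ref{lemma2.1}, this remainder has the sign that furnishes the shock-layer dissipation $\int(|V_{1x}|+|V_{2x}|)\phi^2\,dx$, while the viscous and parabolic terms supply $\int\frac{\psi_x^2}{V^{\alpha+1}}\,dx$ and $\int\frac{\phi_x^2}{V^{\alpha+1}}\,dx$. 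The interaction term $W$ is supported where both profiles are simultaneously away from their end states, so $|W|\lesssim(|V_{1x}|+\cdots)e^{-c|x-s_1t|-c|x-s_2t|}$; since $s_1<0<s_2$, the two layers are separated by $\beta+(s_2-s_1)t$, and integrating in $x$ and $t$ yields $\int_0^t\!\!\int|W|\,dx\,dt\lesssim e^{-C_-\beta}$, which is exactly the second source term on the right-hand side.

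Next I would upgrade to the first- and second-order estimates by returning to the original system \eqref{1.1}, differentiating in $x$ and testing against $\phi_{xx}$, $\psi_{xx}$ and $\psi_{xxx}$ to recover the remaining dissipation $\|\phi_{xx}\|^2$, $\|\psi_{xx}\|^2$ and $\|\psi_{xxx}\|^2$; the profile-derivative and interaction contributions are again absorbed into the shock-layer dissipation and into $e^{-C_-\beta}$. The nonlinear remainder $F$ in \eqref{3.6} is quadratic or higher in $(\phi_x,\psi_x,\ldots)$, so under the standing hypothesis $\sup_{0\le t\le T}\|(\phi,\psi)\|_2\le\delta$ each such term is bounded by $\delta$ times the dissipation and absorbed once $\delta\le\delta_0$ is small; the positivity $f>0$ from \eqref{3.5} guarantees that the leading quadratic form in the energy is coercive. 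Summing the estimates with suitably small weights and applying Gronwall closes the inequality with $C_0$ and $C_-$ independent of $T$.

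The main obstacle I anticipate is the lowest-order estimate for \emph{large} strengths: one must extract a strictly positive shock-layer dissipation from the profile cross terms precisely when $f(V,U_x)$ and $V^{-(\alpha+1)}$ are genuinely far from constants, which is exactly why the effective-velocity decoupling (rather than a direct estimate on \eqref{3.3}) is indispensable, and simultaneously one must show that the coupling between the two strong shocks---carried by $W$ and by the non-constant coefficients---is dominated by the separation factor $e^{-C_-\beta}$. Verifying the correct sign of the profile terms uniformly in $\chi_1,\chi_2$ and propagating the exponential separation through every level of the energy hierarchy is where the bulk of the technical work will lie.
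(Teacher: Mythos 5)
Your overall architecture does match the paper's: the effective velocity $h=u-v^{-(\alpha+1)}v_x$ moves the viscosity into the mass equation, the low-order estimates are performed on the new variables, the high-order estimates return to the original system (\ref{3.3}), and the two-shock interaction $W$ is controlled by $e^{-C_-\beta}$ essentially as in Lemma \ref{lemma4.2}. But your central low-order step has a genuine gap. You propose to test ``the reformulated momentum equation'' against $\psi$, integrate the cross term $-\int f(V,U_x)\phi_x\psi\,\operatorname{d}x$ by parts, and claim the resulting profile-derivative remainder ``has the sign'' of a shock-layer dissipation $\int(|V_{1x}|+|V_{2x}|)\phi^2\,\operatorname{d}x$ by the monotonicity $U_{ix}\le 0$. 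First, this conflates the two systems: the reformulated momentum equation $(\ref{4.6})_2$ reads $\Psi_t+p'(V)\phi_x=-p(v|V)-W$ and contains no $f(V,U_x)$ (that coefficient belongs to $(\ref{3.3})_2$), while in the reformulated system $\phi_t\ne\Psi_x$, so the identity you invoke is unavailable there. Second, and more seriously, the sign claim is exactly what fails for large amplitudes: in the classical Goodman/Matsumura--Nishihara computation the remainders of the form $\frac12\int f_t\phi^2-\int f_x\phi\psi\,\operatorname{d}x$ are \emph{not} sign-definite and are controlled only by smallness of the wave strengths. No dissipation term $\int|V_{ix}|\phi^2\,\operatorname{d}x$ appears anywhere in the paper's proof, and $U_{ix}\le 0$ is not the mechanism.

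What actually closes the low-order estimate is the specific weight pair $\bigl(1,\tfrac{1}{-p'(V)}\bigr)$: multiplying $(\ref{4.6})_1$ by $\phi$ and $(\ref{4.6})_2$ by $\frac{\Psi}{-p'(V)}$ makes the cross terms combine into the exact derivative $-\int(\phi\Psi)_x\,\operatorname{d}x=0$, and the only profile term produced is $\frac12\int\bigl(\frac{1}{p'(V)}\bigr)_t\Psi^2\,\operatorname{d}x$, which is nonnegative because $p''>0$, $V_{1x}\le 0\le V_{2x}$, and $s_1<0<s_2$ --- i.e.\ monotonicity of the $V_i$ combined with the propagation directions, not $U_{ix}\le0$. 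The same weighted pairing at the next order (test with $-\phi_{xx}$ and $\frac{\Psi_{xx}}{p'(V)}$, Lemma \ref{lemma4.4}) yields the good term in $\Psi_x^2$, which absorbs the weight-derivative error via the pointwise inequality $\bigl(\frac{1}{p'(V)}\bigr)_t\ge \min(-s_1,s_2)\bigl|\bigl(\frac{1}{p'(V)}\bigr)_x\bigr|$. Without this weight, the quadratic form you build has non-cancelling cross terms whose coefficients are comparable to the (arbitrarily large) oscillation of $p'(V)$, and no choice of $\delta_0$ can absorb them. Two smaller omissions: you never indicate how the time integral $\int_0^t\|\psi_x\|^2\,\operatorname{d}t$ demanded by the Proposition is obtained (the paper gets $\int_0^t\|\Psi_x\|^2\,\operatorname{d}t$ by testing $(\ref{4.6})_1$ against $\Psi_x$, Lemma \ref{lemma4.5}, and then converts to $\psi$ via Lemma \ref{lemma4.1}); and no Gronwall argument is needed --- the estimates close by direct absorption once $\delta$ and $\varepsilon$ are small.
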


As long as  Proposition \ref{prposition3.1} is proved,   the local solution $(\phi,\psi)$   can be extend to $T =+\infty. $ We have the following Lemma.

\begin{lma}\label{lemma3.1}
  If $(\phi_{0},\psi_{0})\in \mathbb{H}^2$, there exists a positive constant $\delta_{1}=\frac{\delta_{0}}{\sqrt{C_{0}}}$, such that if
$$\|(\phi_{0},\psi_{0})\|_{{2}}^2+   e^{-C_-\beta} \leq \delta_{1}^{2},$$ then  the Cauchy problem  (\ref{3.3}), (\ref{3.4}) has a unique global solution
$(\phi,\psi)\in \mathbb{X}_{\delta_{0}}(0,\infty)$  satisfying
\begin{align*}
\sup_{t\geq0}\|(\phi, \psi)(t)\|_{{2}}^{2}+    \int_{0}^\infty   (  \|  \phi_{x}(t)  \|^2_{1}  +   \|\psi_{x}(t)\|_{2}^2   )    \operatorname{d}t       \leq C_{0}  ( \|(\phi_{0},\psi_{0})\|_{{2}}^2+   e^{-C_-\beta}).
\end{align*}
\end{lma}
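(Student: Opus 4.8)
The plan is to upgrade the local-in-time solvability of the reformulated system \eqref{3.3} to a global statement by means of the a priori estimate of Proposition \ref{prposition3.1}, via the standard continuation (bootstrap) argument. Since Proposition \ref{prposition3.1} already carries the entire quantitative content, the proof of Lemma \ref{lemma3.1} is essentially the mechanism that converts a short-time solution into a global one while preserving the uniform bound $\sup_{t}\|(\phi,\psi)(t)\|_2\le\delta_0$; the only genuinely delicate point is that the threshold $\delta_1=\delta_0/\sqrt{C_0}$ sits exactly at the borderline of the closing inequality.

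First I would record a local existence result, which is standard and does not use the structure peculiar to this problem: for any datum $(\phi_0,\psi_0)\in\mathbb{H}^2$ with $\|(\phi_0,\psi_0)\|_2\le\delta_0$, there is a time $T_0=T_0(\delta_0)>0$, bounded below in terms of $\delta_0$ alone, and a unique solution $(\phi,\psi)\in\mathbb{X}_{\delta}(0,T_0)$ of \eqref{3.3}, \eqref{3.4}. This follows from the parabolic character of the system: the second equation of \eqref{3.3} is a quasilinear diffusion equation for $\psi$ with uniformly positive diffusivity $1/V^{\alpha+1}$ and positive coefficient $f(V,U_x)$ of \eqref{3.5}, coupled to the relation $\phi_t=\psi_x$, so one sets up a contraction mapping (or Picard-type iteration) in $\mathbb{X}_{\delta}(0,T_0)$, using short-time energy estimates to absorb the inhomogeneities $F$ and $W$. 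The one feature I would insist on is that $T_0$ depends only on the size $\delta_0$, so the construction can be restarted with a uniform time step from any later instant at which the $\mathbb{H}^2$ norm is at most $\delta_0$.

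Next comes the continuation. Set $\delta_1=\delta_0/\sqrt{C_0}$ and assume $\|(\phi_0,\psi_0)\|_2^2+e^{-C_-\beta}\le\delta_1^2$; in particular $\|(\phi_0,\psi_0)\|_2\le\delta_1<\delta_0$. I would introduce
\be
T^\ast=\sup\{\,T>0:\ (\phi,\psi)\in\mathbb{X}_{\delta_0}(0,T)\ \text{is a solution}\,\},
\en
which is positive by local existence. For every $T<T^\ast$ the defining bound $\sup_{[0,T]}\|(\phi,\psi)\|_2\le\delta_0$ holds, so Proposition \ref{prposition3.1} applies and returns
\be
\|(\phi,\psi)(t)\|_2^2\le C_0\big(\|(\phi_0,\psi_0)\|_2^2+e^{-C_-\beta}\big)\le C_0\,\delta_1^2=\delta_0^2 .
\en
Thus the a priori bound reproduces itself. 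If $T^\ast<\infty$, the uniform $\mathbb{H}^2$ control together with the dissipation integrals controls $\phi_t$ and $\psi_t$ and shows that $(\phi,\psi)(t)$ is Cauchy in $\mathbb{H}^2$ as $t\uparrow T^\ast$, so $(\phi,\psi)(T^\ast)\in\mathbb{H}^2$ is attained with norm at most $\delta_0$; restarting local existence from $T^\ast$ with its uniform time step then extends the solution past $T^\ast$, contradicting maximality. Hence $T^\ast=+\infty$, and letting $T\to\infty$ in Proposition \ref{prposition3.1} --- the time integral $\int_0^T(\|\phi_x\|_1^2+\|\psi_x\|_2^2)\operatorname{d}t$ being nondecreasing and uniformly bounded --- yields the stated global estimate with the same constant $C_0$.

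The step I expect to be the main obstacle is the closure of this bootstrap at the borderline constant $\delta_1=\delta_0/\sqrt{C_0}$. Because Proposition \ref{prposition3.1} returns $\|(\phi,\psi)(t)\|_2\le\delta_0$ with equality permitted rather than a strict gap, a naive open--closed (or first-exit-time) argument for the set $\{T:\sup_{[0,T]}\|(\phi,\psi)\|_2\le\delta_0\}$ does not immediately give openness. Two routes close the gap: either one allows a small margin (replacing $\delta_1$ by a slightly smaller constant, or noting that the constant $C_0$ in Proposition \ref{prposition3.1} can be taken with room to spare, so that the reproduced bound is strictly below $\delta_0$), or one relies on the uniform-in-time restart above, which requires verifying the continuity of $(\phi,\psi)$ up to $t=T^\ast$ in $\mathbb{H}^2$ so that the limit is a legitimate new datum. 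Aside from this borderline bookkeeping, every estimate needed here is either contained in Proposition \ref{prposition3.1} or part of the routine local theory.
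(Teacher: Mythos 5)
Your proposal is exactly the argument the paper intends: the paper offers no proof of Lemma \ref{lemma3.1} beyond the remark that once Proposition \ref{prposition3.1} is proved ``the local solution $(\phi,\psi)$ can be extended to $T=+\infty$'', and your scheme --- local existence with a lifespan depending only on the data size, combined with the a priori estimate in a continuation argument --- is precisely that standard route. Your flag about the borderline constant $\delta_{1}=\delta_{0}/\sqrt{C_{0}}$ (the bootstrap reproduces $\delta_0$ with equality permitted) is a genuine subtlety the paper silently glosses over, and your remedy of building in a strict margin in the constant is the standard fix.
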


{}

\section{a Priori Estimate  }\label{section4}
Throughout this section, we assume that the problem $(\ref{3.3}), (\ref{3.4})$ has a solution $(\phi,\psi)\in \mathbb{X}_{\delta}(0,T), $ for some $T>0$,
\begin{eqnarray}\label{4.1}
\sup_{0\leq t\leq T}\|(\phi, \psi)(t)\|_{2}\leq \delta.
\end{eqnarray}
It follows from the Sobolev inequality that $\frac{1}{2}v_{m}\leq v \leq \frac{3}{2} \max\{v_{
-},v_{+}\}$, and
\begin{align*}
\sup_{0\leq t\leq T}  \{  \|(\phi, \psi)(t)\|_{\mathbb{L}^{\infty}}    +   \|(\phi_{x}, \psi_{x})(t)\|_{\mathbb{L}^{\infty}}   \}  \leq  {\delta} .
\end{align*}

\subsection{  Low Order Estimate.}In order to  remove  the condition  ``small with same order", we introduce  a new perturbation $(\phi,\Psi)$ instead of $(\phi,\psi)$, where $\Psi$ will be  defined below. Motivated by \cite{vy2016} and \cite{hh2020}, we introduce a new effective velocity   ${}{h}={}{u}-{}{v}^{-(\alpha+1)}{}{v}_{x}$. Setting $ h_0 (x)=:h(x,0) $, the equations (\ref{1.1}), (\ref{1.2}) become
\begin{equation}
\left\{ \begin{array}{ll}
&{}{v}_t-{}{h}_x=(\frac{{}{v}_{x}}{{}{v}^{\alpha+1}})_{x},\\
&{}{h}_t+{}{p}_x=0,
\end{array} \right.\label{4.2}
\end{equation}
and
\begin{equation*}
(v_{0}, h_{0})(x) = (v_0,u_0-v_{0}^{-(\alpha+1)}{v}_{0x}) (x)\longrightarrow (v_{\pm},u_{\pm}),\quad \text{as} \quad  x\rightarrow \pm\infty.
\end{equation*}
Let $H=U-V^{-(\alpha+1)}V_{x}$. Then (\ref{3.2}) is equivalent to
\begin{equation}
\left\{ \begin{array}{ll}
&V_{t}-H_{x}=\left(\frac{V_{x}}{V^{\alpha+1}}\right)_{x},\\
&H_{t}+p(V)_{x}= {W}_{x},\\
&(V,H)(\pm \infty,t)=(v_{\pm},u_{\pm}).
\end{array} \right.\label{4.4}
\end{equation}\\
We define
\begin{eqnarray}
 \int_{-\infty}^x({}{h}-H)\operatorname{d}x:=\Psi.
\label{4.5}\end{eqnarray}
Substituting (\ref{4.4}) from (\ref{4.2}) and integrating the resulting system with respect to $x$, we have from  (\ref{4.5}), $(\ref{3.1})_{1}$ that
\begin{equation}
\left\{ \begin{array}{ll}
&\phi_t- \Psi_x-\frac{\phi_{xx}}{V^{\alpha+1}}+(\alpha+1)\frac{V_x \phi_x }{V^{\alpha+2}}=G\\
&\Psi_t+p'(V)\phi_x=-p({v}|V)-W,
\end{array} \right.  \label{4.6}
\end{equation}
where
\begin{eqnarray*}
G=\frac{{}{v}_{x}}{{}{v}^{\alpha+1}}-\frac{V_{x}}{V^{\alpha+1}} - \frac{\phi_{xx}}{V^{\alpha+1}}+(\alpha+1)\frac{V_x \phi_x }{V^{\alpha+2}},
\end{eqnarray*}
\begin{eqnarray*}
p({}{v}|V)=\left(p({}{v})-p(V)\right)-p'(V)\phi_x,
\end{eqnarray*}
with the  initial data
\begin{eqnarray*}
 \phi  (x,0) \in \mathbb{H}^{2}, \quad \Psi (x,0) \in \mathbb{H}^{1}.
\end{eqnarray*}
\begin{lma}\label{lemma4.1}
Under the assumption of (\ref{4.1}), it holds that
\begin{flalign}\label{s4.07}
\begin{split}
&|p({}{v}|V)|\leq C \phi_{x}^{2},\\
&|p({}{v}|V)_{x}|\leq C (|\phi_{xx}\phi_{x}|+|V_x|\phi_{x}^{2}),\\
&|G|\leq C (|\phi_{xx}\phi_{x}|+|V_x|\phi^{2}_{x}),
\end{split}
\end{flalign}
\begin{flalign}\label{s4.08}
\begin{split}
&|F|\leq C( \phi_{x} ^{2}+|\phi_{x}\psi_{xx}|),\\
&|F_{x}|\leq C(\phi_{x}^{2}+|\phi_{x}^{ } \phi_{x x}^{ }|+|\psi_{x x}^{ } \phi_{x x}^{ }|+|\psi_{x x x}^{ } \phi_{x}^{ }|+|\phi_{x}^{ } \psi_{x x}^{ } |,
\end{split}
\end{flalign}

and
\begin{eqnarray}\label{s4.09} \begin{split}
\|    {\Psi}_{0}  \|_{1}^{2}    \leq&  \|    {\psi}_{0}  \|_{1}^{2}+    C  \|    {\phi}_{0}   \|_{2}^{2},\\
 \| \psi \|^{2}  \leq &     \| \Psi \| ^2+ C\|\phi \|_{ 1}^2,      \\
 \| \psi_{x} \|^{2} \leq &     \| \Psi_{x} \| ^2+ C\|\phi_{x} \|_{ 1}^2.
\end{split}\end{eqnarray}
Here $C$ is a constant depends only on $  v_\pm $ and $  u_\pm $.
\end{lma}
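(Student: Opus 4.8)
The plan is to reduce every inequality in (\ref{s4.07})--(\ref{s4.09}) to two ingredients: pointwise two-sided bounds on the specific volumes, and uniform control of the profile derivatives. First I would record that, by (\ref{4.1}) and the Sobolev embedding stated after it, $\tfrac12 v_m\le v,V\le \tfrac32\max\{v_-,v_+\}$ and $|\phi_x|,|\psi_x|\le\delta$; moreover $|V_x|,|U_x|,|U_{xx}|\le C$, which follows from the profile equations (\ref{2.2})--(\ref{2.3}) and the exponential decay in Lemma \ref{lemma2.1}. On the range above, $p$ and $g(s):=s^{-(\alpha+1)}$ together with all their derivatives are bounded, so every Taylor remainder that appears is genuinely quadratic with a constant depending only on $v_\pm,u_\pm$.

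For the pressure terms I write $\phi_x=v-V$ and apply Taylor's theorem: $p(v|V)=p(v)-p(V)-p'(V)\phi_x=\tfrac12 p''(\theta)\phi_x^2$ for some intermediate $\theta$, giving $|p(v|V)|\le C\phi_x^2$. Differentiating and substituting $v_x=V_x+\phi_{xx}$ yields $p(v|V)_x=[p'(v)-p'(V)]\phi_{xx}+\{[p'(v)-p'(V)]-p''(V)\phi_x\}V_x$; the first bracket equals $p''(\cdot)\phi_x$ while the second brace is the first-order Taylor remainder of $p'$, i.e. $O(\phi_x^2)$, so $|p(v|V)_x|\le C(|\phi_x\phi_{xx}|+|V_x|\phi_x^2)$.

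The viscous parts of $G$ and $F$ are handled by a single algebraic identity. Using $g'(V)=-(\alpha+1)V^{-(\alpha+2)}$ and $v_x=V_x+\phi_{xx}$, a direct computation collapses the four terms of each quantity into
\begin{equation*}
G=V_x\,g(v|V)+\phi_{xx}\,[g(v)-g(V)],\qquad F=U_x\,g(v|V)+\psi_{xx}\,[g(v)-g(V)]-p(v|V),
\end{equation*}
where $g(v|V):=g(v)-g(V)-g'(V)\phi_x$. Since $|g(v|V)|\le C\phi_x^2$ and $|g(v)-g(V)|\le C|\phi_x|$, and $|V_x|,|U_x|\le C$, the bounds on $G$ and $F$ in (\ref{s4.07})--(\ref{s4.08}) follow at once (for $G$ one keeps the $|V_x|$ weight, for $F$ one simply bounds $|U_x|\le C$). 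For $F_x$ I differentiate this identity: $(U_x g(v|V))_x=U_{xx}g(v|V)+U_xg(v|V)_x$, where $g(v|V)_x$ is estimated exactly as $p(v|V)_x$ above and is $O(\phi_x^2)+O(|\phi_x\phi_{xx}|)$; and $(\psi_{xx}[g(v)-g(V)])_x=\psi_{xxx}[g(v)-g(V)]+\psi_{xx}[g(v)-g(V)]_x$ with $|[g(v)-g(V)]_x|\le C(|\phi_x|+|\phi_{xx}|)$. Collecting these together with $|p(v|V)_x|$ produces precisely the terms $\phi_x^2,\ |\phi_x\phi_{xx}|,\ |\psi_{xx}\phi_{xx}|,\ |\psi_{xxx}\phi_x|,\ |\phi_x\psi_{xx}|$ in (\ref{s4.08}).

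Finally, for the norm comparisons in (\ref{s4.09}) I let $\Phi$ be an antiderivative of $g$, so that $v^{-(\alpha+1)}v_x=(\Phi(v))_x$ and likewise for $V$. From the definitions of $h,H$, of $\Psi$ in (\ref{4.5}), of $\psi$ in (\ref{3.1}), and from $v,V\to v_-$ as $x\to-\infty$, one obtains the identity $\Psi=\psi-[\Phi(v)-\Phi(V)]$. Since $|\Phi(v)-\Phi(V)|\le C|\phi_x|$ and $|[\Phi(v)-\Phi(V)]_x|\le C(|\phi_x|+|\phi_{xx}|)$, the triangle inequality at the level of the $L^2$ and $H^1$ norms (then squaring) gives $\|\psi\|\le\|\Psi\|+C\|\phi_x\|$, $\|\psi_x\|\le\|\Psi_x\|+C\|\phi_x\|_1$, and, evaluated at $t=0$, $\|\Psi_0\|_1\le\|\psi_0\|_1+C\|\phi_0\|_2$, which are the asserted estimates up to the generic constant $C$. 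The main obstacle is not any individual inequality but the uniform bookkeeping: one must check that each remainder is genuinely quadratic with a constant controlled by the two-sided bounds on $v,V$, and that $V_x,U_x,U_{xx}$ are uniformly bounded; once this is in place, every line of (\ref{s4.07})--(\ref{s4.09}) reduces to the two displayed identities combined with Taylor's theorem.
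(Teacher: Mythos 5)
Your proposal is correct, and it is worth comparing it with the paper's own (very short) proof. For the norm comparisons (\ref{s4.09}) you do exactly what the paper does in (\ref{s4.10}): write $\Psi=\psi+q$ with $q=-[\Phi(v)-\Phi(V)]$, $\Phi'=g$, use that $v$ and $V$ share the limit $v_-$ at $x=-\infty$ so no boundary term survives, and bound $|q|\le C|\phi_x|$, $|q_x|\le C(|\phi_x|+|\phi_{xx}|)$. For the Taylor-type estimates (\ref{s4.07})--(\ref{s4.08}), however, the paper gives no argument at all: it simply cites \cite{hh2020} for the bounds on $p(v|V)$, $p(v|V)_x$, $G$, and \cite{mm1999} for those on $F$, $F_x$. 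You instead prove them, and your route is clean: the exact identities $G=V_x\,g(v|V)+\phi_{xx}[g(v)-g(V)]$ and $F=U_x\,g(v|V)+\psi_{xx}[g(v)-g(V)]-p(v|V)$ do follow from (\ref{3.6}) and the definition of $G$ upon substituting $v_x=V_x+\phi_{xx}$, $u_x=U_x+\psi_{xx}$ and $g'(V)=-(\alpha+1)V^{-(\alpha+2)}$, and they reduce every bound to second-order Taylor remainders of $p$ and $g$ on the compact range $\tfrac12 v_m\le v,V\le\tfrac32\max\{v_-,v_+\}$, combined with the uniform bounds on $V_x,U_x,U_{xx}$ coming from Lemma \ref{lemma2.1} and (\ref{4.11}); differentiating the identity for $F$ produces precisely the five terms listed in (\ref{s4.08}). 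So your argument is self-contained where the paper's is a citation, and the exact decompositions make transparent why those particular terms appear. One shared, harmless imprecision: squaring the triangle inequality yields $\|\psi\|^2\le 2\|\Psi\|^2+C\|\phi_x\|^2$ rather than the stated coefficient $1$ on $\|\Psi\|^2$; the paper's ``immediately'' glosses over the same point, and the factor is irrelevant in all subsequent uses (e.g.\ Lemma \ref{lemma4.6}).
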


\begin{proof}
Note that
\begin{eqnarray}\label{s4.10}
\begin{split}
   \Psi (x,t)=&\int_{-\infty}^{x}           [ {(u-U )  } (y,t)  ]  \operatorname{ d }y\\
   &-\int_{-\infty}^{x} \left( v^{- \alpha  } v_y-{}{V}^{- \alpha  }V_y\right) (y,t)\operatorname{ d }y \\
    :=& \psi (x,t)  +q(x,t )\leq \psi (x,t)  + C_{ } |\phi_{ x} (x,t)|,\\
     \psi (x,t)=& \Psi (x,t)  -q(x,t )\leq \Psi (x,t)  + C_{ }| \phi_{ x} (x,t)|.
     \end{split}
\end{eqnarray}
one have (\ref{s4.09}) from  (\ref{s4.10}) immediately.   The  estimates (\ref{s4.07}) and (\ref{s4.08}) can  be found in  \cite{hh2020} and   \cite{mm1999}  respectively. Thus the proof is completed.
\end{proof}
It is worth  to point out that the initial data $ (v_0, h_0)(x)   $  should satisfy the following equation
\begin{align}\label{4.8}
\begin{split}
0=&\int_{-\infty}^{\infty}\left(
\begin{array}{cccc}
{}{v}_0(x) -    V(x,0;\beta_{1}, \beta_{2};\beta)  \\
{}{h}_0(x) -    H(x,0;\beta_{1}, \beta_{2};\beta)
\end{array}
\right)
\operatorname{d}x.
\end{split}&
\end{align}
Here $H(x,0;\beta_{1}, \beta_{2};\beta)=U(x,0;\beta_{1}, \beta_{2};\beta)-[V(x,0;\beta_{1}, \beta_{2};\beta)]^{-(\alpha+1)}[V(x,0;\beta_{1}, \beta_{2};\beta)]_{x}$.   By directly calculate,   we know $({\ref{4.8}})$ is equivalent to   $I_i(\beta_1,\beta_2;\beta)=0, i=1,2.$

\begin{lma}\label{lemma4.2}
Under the same conditions of     Proposition \ref{prposition3.1}, it holds that
\begin{flalign}
 \|W\|_{2}\leq C_{ }  e^{-C_-\beta}e^{-c' t},
\end{flalign}
where $ C_{ } ,C_-,c' $ are constants independent of t.
 \end{lma}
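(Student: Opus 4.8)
The plan is to exploit the fact that $W$ is an \emph{interaction} term: it vanishes identically whenever one of the two profiles is absent, so each summand of $W$ carries one factor controlled by the $1$-shock and one factor controlled by the $2$-shock. Since the two shock centers separate linearly in $t$ (and are initially a distance $\approx\beta$ apart), this structure forces exponential smallness in both $\beta$ and $t$. First I would rewrite $W$ using $U_x = U_{1x} + U_{2x}$ and $V = V_1 + V_2 - v_m$. The pressure part $p(V) + p(v_m) - p(V_1) - p(V_2)$ is the mixed second difference $f(a,b) := p(v_m + a + b) + p(v_m) - p(v_m + a) - p(v_m + b)$ with $a = V_1 - v_m$, $b = V_2 - v_m$; since $f(a,0) = f(0,b) = 0$, writing $f(a,b) = \int_0^a\!\int_0^b p''(v_m + \sigma + \tau)\,d\tau\,d\sigma$ gives $|p(V)+p(v_m)-p(V_1)-p(V_2)| \le C\,|V_1 - v_m|\,|V_2 - v_m|$, using that $p''$ is bounded on the range $\tfrac12 v_m \le v \le \tfrac32\max\{v_-,v_+\}$. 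For the viscous part I would factor $\frac{U_{1x}}{V_1^{\alpha+1}} + \frac{U_{2x}}{V_2^{\alpha+1}} - \frac{U_{1x}+U_{2x}}{V^{\alpha+1}} = U_{1x}\big(\frac{1}{V_1^{\alpha+1}} - \frac{1}{V^{\alpha+1}}\big) + U_{2x}\big(\frac{1}{V_2^{\alpha+1}} - \frac{1}{V^{\alpha+1}}\big)$ and bound each bracket by the mean value theorem by $C|V - V_i| \le C|V_j - v_m|$ ($j\neq i$). Thus every term of $W$ is pointwise dominated by $C\big(|V_1-v_m||V_2-v_m| + |U_{1x}||V_2-v_m| + |U_{2x}||V_1-v_m|\big)$.

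Next I would invoke Lemma \ref{lemma2.1} in its shifted form, together with the standard exponential decay of all derivatives of the two viscous profiles (inherited from the profile ODEs): writing $x_1 = s_1 t - \beta_1$, $x_2 = s_2 t + \beta - \beta_2$ for the two shock centers, one has $|\partial_x^k(V_i - v_m)| + |\partial_x^k U_{ix}| \le C e^{-c_i|x - x_i|}$ on the side where the $i$-profile tends to $v_m$, and boundedness on the other side. The geometry is decisive: because $s_1 < 0 < s_2$ and the shifts $\beta_1,\beta_2$ fixed by \eqref{2.9} are bounded, the separation $D(t) := x_2 - x_1 = (s_2 - s_1)t + \beta + \beta_1 - \beta_2$ satisfies $D(t) \ge \tfrac12\beta + c_0 t$ once $\beta$ is large (guaranteed by $\beta^{-1}\le \delta_0$), with $c_0 = s_2 - s_1 > 0$.

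Then I would analyze the product bounds on the three regions $x<x_1$, $x_1<x<x_2$, $x>x_2$. On the middle region both localized factors decay, giving a pointwise bound $\le Ce^{-cD}$ with $c=\min\{c_1,c_2\}$; on each tail the ``wrong-side'' factor is merely bounded, but the localized factor is already of size $e^{-cD}$ and integrable. Integrating the square then yields $\int|V_1-v_m|^2|V_2-v_m|^2\,dx \le C e^{-cD}$, and the same computation for the two viscous products, so that $\|W\| \le C e^{-cD/2} \le C e^{-C_-\beta}e^{-c't}$ with $C_- = c/4$ and $c' = c\,c_0/2$. For the full $\mathbb{H}^2$ bound I would differentiate $W$ once and twice in $x$: by the product rule every derivative lands on a profile factor, and since all profile derivatives retain their exponential localization, $\partial_x W$ and $\partial_x^2 W$ keep exactly the same interaction structure (a $1$-shock-controlled factor times a $2$-shock-controlled factor). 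The identical region analysis gives $\|\partial_x^k W\| \le C e^{-C_-\beta}e^{-c't}$ for $k=0,1,2$, and summing proves the claim.

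The main obstacle I anticipate is purely organizational rather than analytic: keeping the interaction structure manifest after differentiation, and verifying that the factors which are only \emph{bounded} (not decaying) on the ``wrong side'' never spoil integrability. This is handled cleanly by the midpoint splitting at $\tfrac12(x_1+x_2)$, since on each tail the relevant localized profile or profile-derivative is already exponentially small in $D(t)$, so the bounded factor contributes only a harmless constant. A minor point to pin down rigorously is the uniform boundedness of $\beta_1,\beta_2$ in $\beta$ from \eqref{2.9}, which ensures $D(t)$ grows like $\tfrac12\beta + c_0 t$ and hence that the decay rates $C_-$ and $c'$ are genuinely positive and independent of $t$ and $\beta$.
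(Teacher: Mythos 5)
Your proof is correct and follows essentially the same route as the paper: exploit the interaction structure of $W$ (each term pairs a factor localized at one shock with a factor localized at the other), invoke Lemma \ref{lemma2.1} together with the profile ODEs for the derivative bounds, and split the real line spatially so that the linear-in-$(\beta+t)$ separation of the two shock centers yields $\|W\|_2\le C e^{-C_-\beta}e^{-c't}$. The only differences are cosmetic: the paper works with the two one-sided bounds $|\partial_x^n W|\le C|V_i-v_m|$, $i=1,2$, and splits at the fixed point $x=\beta/2$, whereas you keep a bilinear product bound and split at the moving midpoint of the shock centers; both give the same rates.
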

\begin{proof}
 \begin{align}
\begin{split}
|W|=&\left|\frac{U_{1x}}{V_1^{\alpha+1}}+\frac{U_{2x}}{V_2^{\alpha+1}}-\frac{U_{x}}{V^{\alpha+1}}+p(V)+p(v_m)-p(V_{1})-p(V_2) \right|\\
=& \left|\left(\frac{U_{1x}}{V_1^{\alpha+1}}-\frac{U_{1x}}{V^{\alpha+1}}\right)+\left(\frac{U_{2x}}{V_2^{\alpha+1}}-\frac{U_{2x}}{V^{\alpha+1}}\right)\right|\\
&+\left|\left(p(V)-p(V_{1})\right)+\left(p(v_m)-p(V_2)\right)\right|\\
=&\left| U_{1x}\left(\frac{1}{V_1^{\alpha+1}}-\frac{1}{V^{\alpha+1}}\right)+    U_{2x}  \left(\frac{1}{V_2^{\alpha+1}}-\frac{1}{V^{\alpha+1}}\right)\right|\\
&+\left|\left(p(V_{1}+V_{2}-v_{m})-p(V_{1})\right)+\left(p(v_m)-p(V_2)\right)\right|\\
\leq&C\{ |(V_2-v_m)|+|U_{2x}|\}.
\end{split}&
\end{align}
By $(\ref{2.3}) $,  we get
\begin{equation}\label{4.11}
 \left|\frac{\partial^{j}U_{2}}{\partial x^{j}}\right| ,\left|\frac{\partial^{j}(V_{2}-v_m)}{\partial x^{j}}\right|    \leq C  |V_2-v_m|, \forall j\in \mathbb{N}.
\end{equation}
On the other hand,  in the same way,   it is still true to replace ($V_2,U_2$) with ($V_1,U_1$) in (\ref{4.11}).  We get $\left|\frac{\partial^{n}W}{\partial x^{n}}\right| \leq C  |V_i-v_m|, i=1,2; \forall n\in \mathbb{N}.$
By (\ref{2.9}) and (\ref{2.10}), we know   $ {\beta}_{1},{\beta}_{2} $  are bounded. {If we choose $\beta> 3 \max\{|\beta_{1}|,|\beta_{2}| \}$, for $n=0,1 $, it follows that:}
\begin{align*}
\begin{split}
\int_{-\infty}^{\infty} \left|\frac{\partial^{n} W}{\partial{x}^{n}}\right|^{2}  \operatorname{d}x=& \int_{-\infty}^{  \frac{\beta}{2}}\left|\frac{\partial^{n} W}{\partial{x}^{n}}\right|^{2}  \operatorname{d} x+ \int_{  \frac{\beta}{2}}^ { \infty}\left|\frac{\partial^{n} W}{\partial{x}^{n}}\right|^{2} \operatorname{d}x\\
 \leq &C \int_{-\infty }^{ \frac{\beta}{2}}   |V_{2}( x-s_{2}t+\beta_{2}-\beta)-v_{m}|^{2} \operatorname{d}x\\
 &+C \int_{ \frac{\beta}{2}}^ { \infty }  |V_{1}( x-s_{1}t+\beta_{1} )-v_{m}|^{2} \operatorname{d}x\\
 \leq &C \chi_{2}^{2} \int_{-\infty }^{ \frac{\beta}{2}}    \exp[ 2c_{2}( x-s_{2}t+\beta_{2}-\beta)]  ^{} \operatorname{d}x\\
 &+C \chi_{1}^{2} \int_{ \frac{\beta}{2}}^ { \infty }   \exp[ -2c_{1}  ( x-s_{1}t+\beta_{1} )]  ^{} \operatorname{d}x\\
=&  C \chi_{2}^{2} e^{- 2c_{2} s_{2} t}      \quad \frac{e^{c_{2}\left( 2\beta_{2}- \beta\right) }}{2c_{2}} + C   \chi_{1}^{2}   e^{2c_{1} s_{1} t}  \quad \frac{e^{-c_{1}\left(  {\beta} +2\beta_{1}\right)}}{ 2c_{1}} \\
\leq&C   e^{- 2c_{2} s_{2} t}        {e^{ - \frac{c_{2} }{3}\beta }} + C   e^{2c_{1} s_{1} t}    {e^{  \frac{- c_{1} }{3}{\beta}   }}.  \\
\end{split}&
\end{align*}
We have used Lemma \ref{lemma2.1} in the second inequality. Setting $c':=\min (-c_{1} s_{1},  c_{2} s_{2}     ); C_{-} := \frac{1}{6}\min (c_{1}  ,  c_{2}       ),         $ we get the proof of the Lemma.
\end{proof}
\begin{lma}\label{lemma4.3}
Under the same assumptions of Proposition \ref{prposition3.1}, it holds that
\begin{align*}
\begin{split}
&\|(\phi,\Psi)\|_{ }^2(t)+ \int_0^t\int_{-\infty}^{\infty}   \left(\frac{1}{p'(V)}\right)_t \Psi^2\operatorname{d} x\operatorname{ d} t+\int_0^t \| \phi_x\|^2\operatorname{ d} t\\
\leq& C_{}\|(\phi_0,\Psi_0)\|_{ }^2+C{\delta}  \int_0^t \| \phi_{xx}\|^{2}\operatorname{  d}t + C_{} e^{-C_-\beta}.
\end{split}
\end{align*}
\end{lma}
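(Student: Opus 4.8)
The plan is to carry out a weighted $L^2$ energy estimate directly on the decoupled system $(\ref{4.6})$, with the multipliers tuned so that the coupling and drift terms cancel exactly. Concretely, I would multiply $(\ref{4.6})_1$ by $2\phi$ and $(\ref{4.6})_2$ by $-2\Psi/p'(V)$, noting that $-1/p'(V)=V^{\gamma+1}/(a\gamma)$ is bounded above and below by positive constants because $V$ stays in the fixed interval $[\tfrac12 v_m,\tfrac32\max\{v_-,v_+\}]$, and then add and integrate over $\mathbb{R}$. The two coupling terms combine into $-2\phi\Psi_x-2\Psi\phi_x=-2\partial_x(\phi\Psi)$, which integrates to zero; the exact cancellation of the factor $p'(V)$ in the second multiplier is what prevents any spurious remainder $p''(V)V_x\phi\Psi$. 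This yields the time derivative of the energy
\begin{equation*}
E(t)=\int_{-\infty}^{\infty}\Big(\phi^2-\tfrac{1}{p'(V)}\Psi^2\Big)\,\mathrm{d}x,
\end{equation*}
which is equivalent to $\|(\phi,\Psi)\|^2$, together with the weight term $\int(1/p'(V))_t\Psi^2\,\mathrm{d}x$ that I retain on the left-hand side exactly as stated.

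The second structural point is the dissipation. Integrating the viscous term by parts gives
\begin{equation*}
-2\int_{-\infty}^{\infty}\frac{\phi\phi_{xx}}{V^{\alpha+1}}\,\mathrm{d}x=2\int_{-\infty}^{\infty}\frac{\phi_x^2}{V^{\alpha+1}}\,\mathrm{d}x-2(\alpha+1)\int_{-\infty}^{\infty}\frac{V_x\phi\phi_x}{V^{\alpha+2}}\,\mathrm{d}x,
\end{equation*}
and the last integral is cancelled precisely by the explicit drift contribution $+2(\alpha+1)\int V_x\phi\phi_x/V^{\alpha+2}\,\mathrm{d}x$ coming from the multiplication of $(\ref{4.6})_1$ by $2\phi$. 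Hence only the clean, positive dissipation $2\int\phi_x^2/V^{\alpha+1}\,\mathrm{d}x\ge c\|\phi_x\|^2$ remains, which is the source of the $\int_0^t\|\phi_x\|^2$ term on the left-hand side.

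It then remains to control the forcing on the right, for which I would invoke Lemma \ref{lemma4.1} and Lemma \ref{lemma4.2}. For $\int G\phi$ I use $|G|\le C(|\phi_{xx}\phi_x|+|V_x|\phi_x^2)$ together with $\|\phi\|_{\mathbb{L}^\infty}\le\delta$; the factor $\delta$ lets me split $\delta\int|\phi_{xx}\phi_x|$ by Young's inequality into $C\delta\|\phi_{xx}\|^2$, which I throw to the right-hand side, and a multiple of $\|\phi_x\|^2$ absorbed into the dissipation, while $\delta\int|V_x|\phi_x^2$ is absorbed directly. For $\int(p(v|V)/p'(V))\Psi$ I use $|p(v|V)|\le C\phi_x^2$ and $\|\Psi\|_{\mathbb{L}^\infty}\le C\delta$ (obtained from $(\ref{s4.10})$), again absorbing it into the dissipation. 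Finally $\int (W/p'(V))\Psi\le C\|W\|\|\Psi\|$, and since $\|\Psi\|\le C\delta$ is bounded while Lemma \ref{lemma4.2} gives $\int_0^t\|W\|\,\mathrm{d}t\le Ce^{-C_-\beta}$, this term is bounded by $Ce^{-C_-\beta}$. Integrating in time and using $E\sim\|(\phi,\Psi)\|^2$ with $E(0)\le C\|(\phi_0,\Psi_0)\|^2$ then closes the estimate.

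I expect the main difficulty to lie in the bookkeeping of $\int G\phi$: since $G$ contains $\phi_{xx}$, one derivative beyond what the low-order dissipation $\|\phi_x\|^2$ controls, this contribution cannot be closed at the present level and must be deferred to the right-hand side carrying the small factor $\delta$, to be absorbed only later once the higher-order estimates are in hand. By contrast, the genuinely new interaction term $\int (W/p'(V))\Psi$ — the only place where the initial separation $\beta$ of the two shocks enters — is handled cleanly through the exponential decay of Lemma \ref{lemma4.2}, which is precisely what produces the $e^{-C_-\beta}$ on the right.
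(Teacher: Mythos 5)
Your proposal is correct and follows essentially the same route as the paper: the same multipliers $\phi$ and $-\Psi/p'(V)$ (up to a factor of $2$), the same exact cancellations of the coupling term $(\phi\Psi)_x$ and of the $V_x\phi\phi_x$ drift against the integrated-by-parts viscous term, and the same treatment of the right-hand side via Lemma \ref{lemma4.1} and Lemma \ref{lemma4.2}, deferring $C\delta\int_0^t\|\phi_{xx}\|^2\,\mathrm{d}t$ to the right and absorbing the $\|\phi_x\|^2$ contributions into the dissipation. The only differences are cosmetic (the factor $2$ normalization and your explicit spelling out of the cancellations that the paper leaves implicit in its identity (4.12)).
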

\begin{proof}
Multiply $(\ref{4.6})_1 $ and $(\ref{4.6})_2$ by $\phi$ and $\frac{\Psi}{-p'(V)}$, respectively, sum them up, and  integrate result with  respect to $t$ and $x$ over $ [0,t]\times \mathbb{R} $. We have
\begin{align}
\begin{split}
&\frac{1}{2}\int_{-\infty}^{\infty}      \left(\phi^2-\frac{\Psi^2}{p'(V)}\right)   \operatorname{d}x
+\int_0^t\int_{-\infty}^{\infty}\left\{\frac{1}{2}\left(\frac{1}{p'(V)}\right)_t \Psi^2
+       \frac{\phi_{x}^2}{V^{\alpha+1}}\right\}   \operatorname{d} x \operatorname{ d} t \\
=&\int_0^t\int_{-\infty}^{\infty}       G_{ }\phi  \operatorname{d}x \operatorname{ d} t
+ \int_0^t\int_{-\infty}^{\infty}          \frac{p({v}|V)\Psi}{p'(V)}  \operatorname{d}x \operatorname{ d} t
\\
&+ \int_0^t\int_{-\infty}^{\infty}        W\frac{\Psi}{p'(V)}  \operatorname{d }x \operatorname{ d} t+\frac{1}{2}    \int_{-\infty}^{\infty}      \left(\phi^2-\frac{\Psi^2}{p'(V)}\right)\Big|_{t=0}   \operatorname{d }x =: \sum_{i=1}^4 A_i.
\label{4.12}\end{split}&
\end{align}
Utilize to Lemma \ref{lemma4.1}, we can get
\begin{align}
\begin{split}
&|A_1+A_2|\\
\leq& C \left(\int_0^t\int_{-\infty}^{\infty}   \left|  \phi_{xx}^{2} \phi \right| +  \left|    \phi_x \phi_{xx}  \phi\right|  +     \left| \Psi \phi_x^2 \right|  \operatorname{d}x \operatorname{d}t\right)\\
\leq& C \int_0^t    \|\phi\|_{\mathbb{L}^\infty}    \int_{-\infty}^{\infty}   \left|\phi_x^{2}+ \phi_{xx}^{2}  \right| \operatorname{d}x \operatorname{d}t+ C \int_0^t      \|\Psi\|_{\mathbb{L}^\infty}   \int_{-\infty}^{\infty}   \phi_x^2  \operatorname{d}x \operatorname{d}t\\
\leq&   C (\|\phi\|_{2}+  \|\psi\|_{1}   ) \int_0^t        \|\phi_x\|^2 +\|\phi_{xx}\|^2  \operatorname{  d}t\\
\leq & C \delta \int_0^t        \|\phi_x\|^2 +\|\phi_{xx}\|^2  \operatorname{  d}t.
\end{split}&
\end{align}
With the help of Lemma \ref{lemma4.2}, one has
\begin{align}
\begin{split}
|A_3|\leq& C  \int_0^t\int_{-\infty}^{\infty}      \left| W \Psi \right|  \operatorname{d}x\operatorname{d}t\leq C  \int_0^t     \|W\|   \|\Psi\|        \operatorname{d}t\leq  C  \delta     e^{-C_- \beta}.
\end{split}&
\label{4.14}\end{align}
Taking $\delta$  sufficient small, using (\ref{4.12})-(\ref{4.14}), we get Lemma \ref{lemma4.3}.
\end{proof}
\begin{lma}\label{lemma4.4}
Under the same assumptions of Proposition \ref{prposition3.1}, it holds that
\begin{eqnarray*}
\|(\phi,\Psi)(t)\|_{ 1}^2+\int_0^t\|  \phi_x  \|_{ 1}^2\operatorname{d}t\leq C_{ }\|(\phi_0,\Psi_0)\|_{ 1}^2 + C_{ } e^{-C_-\beta}.
\end{eqnarray*}
\end{lma}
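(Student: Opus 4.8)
The plan is to raise the zeroth-order bound of Lemma \ref{lemma4.3} to first order by differentiating the reformulated system \eqref{4.6} once in $x$ and repeating the \emph{same} weighted energy argument, and then to add the outcome to Lemma \ref{lemma4.3} so that its unfavorable term $C\delta\int_0^t\|\phi_{xx}\|^2\,\mathrm{d}t$ is absorbed by the newly produced dissipation. Throughout, $\delta$ will be chosen small relative to the (fixed, possibly large) constants built from the shock strengths $v_\pm,u_\pm$; this is what the large-amplitude framework permits and is consistent with the $T$-independence demanded by Proposition \ref{prposition3.1}.

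Concretely, I would differentiate $\eqref{4.6}_1$ and $\eqref{4.6}_2$ in $x$, multiply the first by $\phi_x$ and the second by $\Psi_x/(-p'(V))$—the very multipliers of Lemma \ref{lemma4.3}—then sum and integrate over $\mathbb{R}\times[0,t]$. After one integration by parts the cross terms $\int\Psi_x\phi_{xx}$ and $-\int\phi_{xx}\Psi_x$ cancel, exactly as at zeroth order, leaving on the left the energy $\|\phi_x\|^2+\int\Psi_x^2/(-p'(V))$, the parabolic dissipation $\int_0^t\!\int\phi_{xx}^2/V^{\alpha+1}$, and the favorable localized term $\tfrac12\int_0^t\!\int(1/p'(V))_t\,\Psi_x^2$, which is nonnegative because $(1/p'(V))_t=-p''(V)U_x/p'(V)^2\ge0$ by $V_t=U_x\le0$ (Lemma \ref{lemma2.1}).

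Next I would estimate the right-hand side. The variable-coefficient commutators all carry a factor $V_x$ or $V_{xx}$; by Young's inequality their $\phi_{xx}$-parts go into the dissipation, their localized $\Psi_x^2$-parts into the favorable term above, and their $\phi_x^2$-parts into the global dissipation $\int_0^t\|\phi_x\|^2$ furnished by Lemma \ref{lemma4.3} (here one uses $\|\phi_x\|_{\mathbb{L}^\infty}\le\delta$ together with the smallness of $\delta$ relative to the shock-strength constants). The nonlinear contributions $\int G_x\phi_x=-\int G\phi_{xx}$ and $\int(p(v|V))_x\Psi_x/(-p'(V))$ are controlled via the pointwise bounds of Lemma \ref{lemma4.1} (the delicate, non-localized part of the latter is treated separately below), while the forcing $\int W_x\,\Psi_x/(-p'(V))$ is handled by Lemma \ref{lemma4.2} and yields the exponentially small $Ce^{-C_-\beta}$. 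Adding the resulting inequality to Lemma \ref{lemma4.3}, the term $C\delta\int_0^t\|\phi_{xx}\|^2$ is absorbed by $\int_0^t\!\int\phi_{xx}^2/V^{\alpha+1}$, and the initial weighted norms are converted to $\|(\phi_0,\Psi_0)\|_1^2$ through \eqref{s4.09}; this closes the $\mathbb{H}^1$ estimate.

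I expect the genuine obstacle to be the coupling term $\int(p(v|V))_x\Psi_x/(-p'(V))$. Since the effective-velocity reformulation has shifted all viscosity onto the $\phi$-equation, the $\Psi$-equation $\eqref{4.6}_2$ has no global dissipation, so there is no $\int_0^t\|\Psi_x\|^2$ at one's disposal; the non-localized piece $\int|\phi_x\phi_{xx}\Psi_x|$ coming from $|(p(v|V))_x|\le C|\phi_x\phi_{xx}|$ cannot be split by Young's inequality without creating $C\delta\int_0^t\|\Psi_x\|^2$, whose Grönwall factor $e^{C\delta t}$ would wreck the $T$-independence. The remedy is to eliminate $\Psi_x$ using $\eqref{4.6}_1$, i.e. $\Psi_x=\phi_t-\partial_x(\phi_x/V^{\alpha+1})-G$: the $\phi_t$-part, via $\phi_t=\psi_x$ and one integration by parts, becomes a cubic energy correction of the type $\tfrac{\mathrm d}{\mathrm dt}\int\frac{p''(V)}{-p'(V)}\phi_x^3$ of size $O(\delta)\|\phi_x\|^2$ (moved into the energy), whereas every remaining part carries an extra $\phi_{xx}$ (absorbed into the dissipation using $\|\phi_x\|_{\mathbb{L}^\infty}\le\delta$) or a factor $V_x$ (localized, hence controlled). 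Carrying out this substitution cleanly, so that no $\int_0^t\|\Psi_x\|^2$ survives, is the crux of the proof.
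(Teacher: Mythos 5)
Your plan has the same skeleton as the paper's proof: the paper carries out the first-order weighted estimate on $(\ref{4.6})$ with multipliers $-\phi_{xx}$ and $\Psi_{xx}/p'(V)$ (equivalent, up to the same commutator terms, to your ``differentiate, then multiply by $\phi_x$ and $\Psi_x/(-p'(V))$''), gets the same energy, the same dissipation $\int\!\!\int\phi_{xx}^2/V^{\alpha+1}$, and the same favorable term $\int\!\!\int(1/p'(V))_t\Psi_x^2$; it absorbs the localized cross terms via the pointwise inequality $(1/p'(V))_t\geq\min(-s_1,s_2)\,|(1/p'(V))_x|$ (which you invoke only implicitly — it is the precise reason ``localized $\Psi_x^2$-parts'' can be fed into the favorable term); it treats $W_x$ by Lemma \ref{lemma4.2}; and it closes by adding Lemma \ref{lemma4.3} so that the slack $C\delta\int_0^t\|\phi_{xx}\|^2$ is absorbed. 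You also correctly identified the crux, the non-localized coupling term $B_4=\int_0^t\!\!\int p({v}|V)_x\Psi_x/p'(V)$, and correctly explained why a naive Young splitting fails (no $\int_0^t\|\Psi_x\|^2$ dissipation exists at this stage; Lemma \ref{lemma4.5} comes \emph{after} this lemma). The genuine difference is how that term is killed. The paper's remedy is far simpler than yours: by $(\ref{s4.10})$ one has $\Psi_x=\psi_x+q_x$ with $|q_x|\leq C(|\phi_{xx}|+|V_x||\phi_x|)$, so every contribution to $B_4$ is cubic with at least one factor ($\psi_x$ or $\phi_x$) whose $\mathbb{L}^\infty$-norm is $\leq C\delta$ by the a priori assumption (\ref{4.1}), giving $|B_4|\leq C\delta\int_0^t(\|\phi_x\|^2+\|\phi_{xx}\|^2)$ in two lines (see (\ref{4.19})). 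Your remedy — substituting $\Psi_x$ from $(\ref{4.6})_1$ and producing a cubic energy correction — can be made to work, but be aware of a trap in its execution: after using $\phi_t=\psi_x$ and one integration by parts you face $\int\!\!\int p({v}|V)\psi_{xx}/p'(V)$, and Young's inequality there produces $C\delta\int_0^t\|\psi_{xx}\|^2$, which is exactly as unavailable at this stage as $\int_0^t\|\Psi_x\|^2$. You must instead write $\psi_{xx}=\phi_{xt}$ and use the exact-derivative structure $p({v}|V)\phi_{xt}=\partial_t\big(\int_V^{{v}}p(s|V)\,\mathrm{d}s\big)+O(\phi_x^2)|V_t|$, whose boundary term is your cubic correction of size $O(\delta)\|\phi_x\|^2$ and whose remainders carry the localized factor $V_t$ (comparable to $V_x$). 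Since both routes ultimately rest on the same a priori $\mathbb{L}^\infty$ bound on $\psi_x$, your detour buys nothing over the paper's direct use of $(\ref{s4.10})$, but with the correction above it does close.
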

\begin{proof}
Multiply $ (\ref{4.6})_1 $ and $  (\ref{4.6})_2 $ by $-\phi_{xx}$  and $\frac{\Psi_{xx}}{p'(V)}$, respectively and sum over the result, intergrade the result with  respect to $t$ and $x$ over $ [0,t]\times \mathbb{R}$. We have
\begin{align}
\begin{split}
& \frac{1}{2}  \int_{-\infty}^{\infty}  \left(\phi_x^2-\frac{\Psi_x^2}{p'(V)}\right)    \operatorname{d}x    + \int_0^t\int_{-\infty}^{\infty} \left\{ \frac{1}{2}  \left(\frac{1}{p'(V)}\right)_t\Psi_x^2    + \frac{\phi^2_{xx}}{V^{\alpha+1}} \right\}\operatorname{d}x\operatorname{d}t \\
=&\frac{1}{2}  \int_{-\infty}^{\infty}  \left(\phi_x^2-\frac{\Psi_x^2}{p'(V)}\right)\Big|_{t=0}    \operatorname{d}x  \\
&-\int_0^t\int_{-\infty}^{\infty}  \left[ G -(\alpha+1)\frac{V_x }{V^{\alpha+2}}  \phi_x \right]\phi_{xx}   \operatorname{d}x\operatorname{d}t  \\
 &-  \int_0^t\int_{-\infty}^{\infty}     \left(\frac{1}{p'(V)}\right)_{x}  p'(V) \Psi_{x} \phi_{x}  \operatorname{d}x\operatorname{d}t + \int_0^t\int_{-\infty}^{\infty}  \frac{\Psi_{x}W_{  x}}{p'(V)}  \operatorname{d}x\operatorname{d}t\\
  &+  \int_0^t\int_{-\infty}^{\infty}      \frac{1}{p'(V)} p(v|V)_{x}\Psi_{x} \operatorname{d}x\operatorname{d}t\\
   =:& \frac{1}{2}  \int_{-\infty}^{\infty}  \left(\phi_x^2-\frac{\Psi_x^2}{p'(V)}\right)\Big|_{t=0}    \operatorname{d}x +\sum_{i=1}^{4} B_i.
\end{split}&
\label{4.15}\end{align}{}
Now we estimate $B_i$ term by term. The Cauchy inequality indicates that
\begin{align}
\begin{split}
|B_1|&    \leq     C  \int_0^t\int_{-\infty}^{\infty}   (|\phi_{xx}\phi_x|+| \phi^{2}_x|)|{ \phi_{xx}}| +      | \phi_x     \phi_{xx} |               \operatorname{d}x\operatorname{d}t .\\
  &\leq (C \delta + \varepsilon )\int_0^t  \|\phi_{xx}\|^{2}  \operatorname{d}t  + C_{\varepsilon}   \int_0^t   \|\phi_{x}\|^2\operatorname{d}t,\\
\end{split}&
\end{align}
and
\begin{align}
\begin{split}
B_2  \leq&  \min(-s_1;s_2) \int_0^t \int_{-\infty}^{\infty}\left|\left(\frac{1}{4 p'(V)}\right)_x\right|\Psi_x ^2\operatorname{d}x\operatorname{d}t \\
&+ C\int_0^t\int_{-\infty}^{\infty}\left| \left(\frac{1}{ p'(V)}\right)_x [p'(V)]^2\right|\phi_x ^2\operatorname{d}x\operatorname{d}t\\
\leq& \int_0^t\int_{-\infty}^{\infty}\left(\frac{1}{4 p'(V)}\right)_t \Psi_x ^2\operatorname{d}x\operatorname{d}t +C \int_{0}^{t}   \|\phi_x\| ^2  \operatorname{d}t\\
\end{split}&
\label{4.17}\end{align}
The last inequality is base on the following inequality
\begin{align*}
\begin{split}
 \left(\frac{1}{p'({}{V})}\right)_t =& \left(p'({}{V})\right)^{-2} p''({}{V}) (-{}{V}_{1}'(-s_1)+{}{V}'_{2}( s_2))  \\
\geq&  \left(p'({}{V})\right)^{-2} p''({}{V}) |{}{V}_{x} | \min(-s_1; s_2) \\
=&\min(-s_1; s_2) \left|\left(\frac{1}{p'({}{V})}\right)_x\right|.
\end{split}&
\end{align*}
Making use of Lemma \ref{lemma4.2}, it follows that
\begin{align}
\begin{split}
|B_3|\leq \int_0^t\int_{-\infty}^{\infty}  \left| \frac{\Psi_{x}W_{  x}}{p'(V)}  \right|   \operatorname{d}x\operatorname{d}t \leq C  \int_0^t  \|W_{x}\|  \|\Psi_{x}\|   \operatorname{d}t \leq C \delta e^{-C_-\beta} .
\end{split}&
\end{align}
By $(\ref{s4.10})_{1}$ and the Sobolev inequality, we obtain
\begin{align}\label{4.19}
\begin{split}
|B_4|\leq & C\int_0^t\int_{0}^{\infty}     \left| (\phi_{x}\phi_{xx}+V_x\phi_{x}^{2}) \Psi_{x}       \right| \operatorname{d}x \operatorname{d} t    \\
\leq&   C\int_0^t\int_{0}^{\infty}     \left| (\phi_{x}\phi_{xx}+V_x\phi_{x}^{2}) \psi_{x}       \right|   \operatorname{d}x\operatorname{d} t   \\
& + C\int_0^t\int_{0}^{\infty}          \left| (\phi_{xx}\phi_{xx}+V_x\phi_{x}\phi_{xx} ) \phi_{x}       \right| \operatorname{d}x\operatorname{d} t \\
\leq&   C (\|\phi\|_{2}+  \|\psi\|_{2}   ) \int_0^t        \|\phi_x\|^2 +\|\phi_{xx}\|^2  \operatorname{  d}t \\
\leq&   C\delta \int_0^t    (\|\phi_{xx}\|^2+\|\phi_{x}\|^2 )    \operatorname{ d} t.
\end{split}
\end{align}
From (\ref{4.15})-(\ref{4.19}), we get
\begin{align*}
\begin{split}
& \frac{1}{2} \int_{-\infty}^{\infty}  \left(\phi_x^2-\frac{\Psi_x^2}{p'(V)}\right)    \operatorname{d}x+ \frac{1}{4}\int_0^t\int_{-\infty}^{\infty}    \left(\frac{1}{p'(V)}\right)_t\Psi_x^2\operatorname{d}x\operatorname{d}t  +\int_0^t\int_{-\infty}^{\infty}  \frac{\phi^2_{xx}}{V^{\alpha+1}} \operatorname{d}x\operatorname{d}t \\
\leq&  (C +C\delta+C_{\varepsilon} )\int_0^t   \|\phi_{x}\|^2 \operatorname{d}t
 +(C\delta+\varepsilon )\int_0^t     \|\phi_{xx}\|^2  \operatorname{d}t\\
& +  C_{ } e^{-C_-\beta}+C_{ }    \left(   \|\phi_{0x}\|^2   +\|\Psi_{0x}\|^2 \right)   .
\end{split}&
\end{align*}
Choosing  $\varepsilon$  appropriately small and $\delta$ sufficient small,   together with Lemma \ref{lemma4.3}, we get  the proof of Lemma \ref{lemma4.4}.
\end{proof}{}
\begin{lma}\label{lemma4.5}
Under the same assumptions of Proposition \ref{prposition3.1}, it holds that
\begin{eqnarray*}
\int_0^t    \|\Psi_{x}(t)\|_{  }^2   \operatorname{d}t  \leq   C  \|(\phi_0,\Psi_0)\|_{ 1}^2 +    C e^{-C_-\beta}.
\end{eqnarray*}
\end{lma}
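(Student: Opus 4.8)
The plan is to derive the space-time bound on $\Psi_x$ by testing the first equation of (\ref{4.6}) against $\Psi_x$ itself, which directly produces $\int_0^t\|\Psi_x\|^2\operatorname{d}t$ on the left, and then to control every resulting term by quantities already bounded in Lemmas \ref{lemma4.3} and \ref{lemma4.4}. Concretely, I would multiply $(\ref{4.6})_1$ by $\Psi_x$ and integrate over $[0,t]\times\mathbb{R}$ to obtain
\begin{align*}
\int_0^t\int_{-\infty}^\infty \Psi_x^2\operatorname{d}x\operatorname{d}t = \int_0^t\int_{-\infty}^\infty \phi_t\Psi_x\operatorname{d}x\operatorname{d}t - \int_0^t\int_{-\infty}^\infty \frac{\phi_{xx}}{V^{\alpha+1}}\Psi_x\operatorname{d}x\operatorname{d}t + \int_0^t\int_{-\infty}^\infty (\alpha+1)\frac{V_x\phi_x}{V^{\alpha+2}}\Psi_x\operatorname{d}x\operatorname{d}t - \int_0^t\int_{-\infty}^\infty G\Psi_x\operatorname{d}x\operatorname{d}t.
\end{align*}
The three terms not involving $\phi_t$ are routine: by Cauchy's inequality with a small parameter $\varepsilon$, each is bounded by $\varepsilon\int_0^t\|\Psi_x\|^2\operatorname{d}t$ plus a multiple of $\int_0^t(\|\phi_x\|^2+\|\phi_{xx}\|^2)\operatorname{d}t$, where I use $|V_x|\le C$, the bound $|G|\le C(|\phi_{xx}\phi_x|+|V_x|\phi_x^2)$ from Lemma \ref{lemma4.1}, and the $\mathbb{L}^\infty$-smallness $\|(\phi_x,\psi_x)\|_{\mathbb{L}^\infty}\le\delta$ from (\ref{4.1}) to absorb the higher-order pieces of $G^2$.

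The main obstacle, and the only term requiring care, is $\int_0^t\int\phi_t\Psi_x$, since it carries a time derivative that must be removed before the estimate can be closed. I would integrate by parts once in $x$ and once in $t$,
\begin{align*}
\int_0^t\int_{-\infty}^\infty \phi_t\Psi_x\operatorname{d}x\operatorname{d}t = -\int_{-\infty}^\infty \phi_x\Psi\operatorname{d}x\Big|_0^t + \int_0^t\int_{-\infty}^\infty \phi_x\Psi_t\operatorname{d}x\operatorname{d}t,
\end{align*}
and then substitute $\Psi_t = -p'(V)\phi_x - p(v|V) - W$ from $(\ref{4.6})_2$. Because $p'(V)<0$, the leading contribution $-\int_0^t\int p'(V)\phi_x^2 = \int_0^t\int|p'(V)|\phi_x^2$ is a nonnegative term bounded by $C\int_0^t\|\phi_x\|^2\operatorname{d}t$; the remaining pieces are handled by $|p(v|V)|\le C\phi_x^2$ (Lemma \ref{lemma4.1}) together with the $\mathbb{L}^\infty$ bound, and by $\int_0^t\|\phi_x\|\,\|W\|\operatorname{d}t\le Ce^{-C_-\beta}$ via Lemma \ref{lemma4.2}. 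The boundary term $\int\phi_x\Psi\operatorname{d}x\big|_0^t$ is estimated by $\tfrac12(\|\phi_x(t)\|^2+\|\Psi(t)\|^2)+\tfrac12(\|\phi_{0x}\|^2+\|\Psi_0\|^2)$, each of which is controlled uniformly in $t$ by Lemma \ref{lemma4.4}.

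Finally, choosing $\varepsilon$ small enough to absorb the $\varepsilon\int_0^t\|\Psi_x\|^2\operatorname{d}t$ terms into the left-hand side, every remaining term is bounded either by $\int_0^t\|\phi_x\|_1^2\operatorname{d}t$, by $\sup_{t}\|(\phi,\Psi)(t)\|_1^2$, or by $e^{-C_-\beta}$. Invoking Lemma \ref{lemma4.4} to bound $\int_0^t\|\phi_x\|_1^2\operatorname{d}t$ and $\sup_t\|(\phi,\Psi)(t)\|_1^2$ by $C\|(\phi_0,\Psi_0)\|_1^2 + Ce^{-C_-\beta}$, and translating $\|\Psi_0\|$ and $\|\phi_{0x}\|$ into the data norm via (\ref{s4.09}), yields the stated estimate.
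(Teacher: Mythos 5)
Your proof is correct and takes essentially the same route as the paper: both test $(\ref{4.6})_1$ with $\Psi_{x}$, transfer the time derivative off $\phi_{t}$ by integration by parts (your boundary term $\int \phi_{x}\Psi \operatorname{d}x$ is the paper's $\int \phi\Psi_{x}\operatorname{d}x$ up to one integration by parts in $x$), substitute $\Psi_{t}$ from $(\ref{4.6})_2$, and close the estimate with Lemmas \ref{lemma4.1}, \ref{lemma4.2} and \ref{lemma4.4}. The only cosmetic difference is that you split $p(v)-p(V)$ into $p'(V)\phi_{x}+p(v|V)$ while the paper estimates it whole; both give the same bounds.
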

\begin{proof}
Multiplying $(\ref{4.6})_1 $  by $\Psi_{x}$  and make use of $(\ref{4.6})_2$, we get
\begin{align}
\begin{split}
\Psi_{x}^{2}=&(\phi\Psi_{x})_{t}+\{\phi[ p(v)-p(V)+W]\}_{x}-\phi_{x} (p( v)-p({V}{}))\\
&-\frac{\Psi_{x}\phi_{xx}}{V^{\alpha+1}}-\phi_x W -\Psi_{x} G +(\alpha+1) \frac{V_x}{V^{\alpha+2}}\phi_x\Psi_{x} .
\label{4.20}
\end{split}&
\end{align}
Integrating $ (\ref{4.20})$ with  respect to $t$ and $x$ over $ [0,t]\times \mathbb{R} $, we obtain that
\begin{align}
\begin{split}
& \int_{0}^{t}\int_{-\infty}^{\infty}  \Psi_{x}^{2}  \operatorname{d}x\operatorname{d}t\\
=&-\int_{-\infty}^{\infty} \phi\Psi_{x}|_{t=0}\operatorname{d}x-\int_{0}^{t}\int_{-\infty}^{\infty} \Psi_{x} G  \operatorname{d}x\operatorname{d}t\\
&+    \int_{0}^{t}\int_{-\infty}^{\infty} (\alpha+1)\frac{V_x}{V^{\alpha+2}} \Psi_{x} \phi_x\operatorname{d}x\operatorname{d}t
+\int_{-\infty}^{\infty}\phi\Psi_{x}\operatorname{d}x\\
&-\int_{0}^{t}\int_{-\infty}^{\infty} \frac{\Psi_{x}\phi_{xx}}{V^{\alpha+1}}\operatorname{d}x\operatorname{d}t-\int_{0}^{t}\int_{-\infty}^{\infty}\phi_{x}\left(p(v)-p(V)\right)       \operatorname{d}x\operatorname{d}t\\
&-\int_{0}^{t}\int_{-\infty}^{\infty} \phi_{x}W       \operatorname{d}x\operatorname{d}t\\
=&-\int_{-\infty}^{\infty} \phi\Psi_{x}|_{t=0}\operatorname{d}x+\sum_{i=1}^6 H_i.
\end{split}&
\label{4.21}\end{align}
We estimate $H_i$ term by term. By the Cauchy inequality, it follows that
\begin{align}
\begin{split}
H_1&\leq C \int_{0}^{t}  \|\phi_{x}\|_{L^{\infty}}  \int_{-\infty}^{\infty}    \Psi_{x}(|\phi_{xx}|+  |\phi_{x}|)         \operatorname{d}x\operatorname{d}t\\
&\leq  \varepsilon \int_{0}^{t}  \|\Psi_{x}\|^{2}  \operatorname{d}t+ C_{\varepsilon} \int_{0}^{t}   ( \|\phi_{xx}\|^{2} + \|\phi_{x} \|^{2}   )      \operatorname{d}t,
\end{split}&
\end{align}
In addition, it is straightforward to imply that
\begin{align}
\begin{split}
&H_2+H_3+H_4+H_5\\
\leq &  \| (\phi^{}, \Psi_{x}) \|^{2}+\varepsilon\int_{0}^{t} \|\Psi_{x}\|^{2}\operatorname{  d}t+C_{\varepsilon}\int_{0}^{t}( \|\phi_{xx}\|^2       + \|\phi_{x}\|^2 )\operatorname{  d}t.
\label{c4}\end{split}&
\end{align}
Making use of Lemma \ref{lemma4.2}, we have
\begin{align}
\begin{split}
H_6&=\int_{0}^{t}\int_{-\infty}^{\infty}\phi_{x}W       \operatorname{d}x\operatorname{d}t \leq  \int_{0}^{t}  \| W\|       \|\phi_{x}\|      \operatorname{d}t
\leq C \delta e^{- C_-\beta}.
\label{4.24}
\end{split}&
\end{align}
Collecting   (\ref{4.21})-(\ref{4.24}) and using Lemma \ref{lemma4.4}, we get the Lemma \ref{lemma4.5}.
\end{proof}
Combining Lemma \ref{lemma4.3}-Lemma \ref{lemma4.5}, we  obtain the following low order estimates
\begin{align*}
\|(\phi,\Psi)\|_{ 1}^2(t)+ \int_0^t  \| \Psi_x\|^2  \operatorname{d}t+\int_0^t \| \phi_x\|_{1}^2 \operatorname{d}t\leq C_{}\|(\phi_0,\Psi_0)\|_{ 1}^2 + C_{} e^{-C_-\beta},
\end{align*}
with the help of $(\ref{s4.09})$, which can be rewritten by the variables $\phi$ and $\psi$ as
\begin{lma}\label{lemma4.6}
Under the same assumptions of Proposition \ref{prposition3.1}, it holds that
\begin{align*}
\begin{split}
&(\| \phi \|_{ 1}^2 + \| \psi \| ^2)(t)+\int_0^t  \| \psi_x\|^2  \operatorname{d}t+\int_0^t \| \phi_x\|_{1}^2 \operatorname{d}t\leq C_{}\|     \phi_0 \|_{2}^2 +C_{}\| \psi_0 \|_{ 1}^2 + C_{} e^{-C_-\beta}.
\end{split}
\end{align*}
\end{lma}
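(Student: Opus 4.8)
The plan is to assemble the three preceding lemmas into a single low-order estimate phrased in the variables $(\phi,\Psi)$, and then to translate that estimate back into the original variables $(\phi,\psi)$ by means of the elementary comparison inequalities \eqref{s4.09}. Since those comparison inequalities are already established (they follow at once from the representation $\Psi=\psi+q$ with $|q|\le C|\phi_x|$ in the proof of Lemma \ref{lemma4.1}), the entire argument is a bookkeeping combination and no new analytic input is needed.

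First I would add the conclusions of Lemma \ref{lemma4.4} and Lemma \ref{lemma4.5}. Lemma \ref{lemma4.4} already controls $\|(\phi,\Psi)(t)\|_1^2$ together with the dissipation $\int_0^t\|\phi_x\|_1^2\operatorname{d}t$, while Lemma \ref{lemma4.5} supplies the remaining piece $\int_0^t\|\Psi_x\|^2\operatorname{d}t$; both are dominated by the same right-hand side $C\|(\phi_0,\Psi_0)\|_1^2+Ce^{-C_-\beta}$. (These two lemmas already incorporate the zeroth-order Lemma \ref{lemma4.3}, which was used to absorb $\int_0^t\|\phi_{xx}\|^2\operatorname{d}t$; the nonnegative good term involving $(1/p'(V))_t\Psi^2$ is simply discarded from the left-hand side.) Summing produces the intermediate estimate
\begin{align*}
\|(\phi,\Psi)\|_1^2(t)+\int_0^t\|\Psi_x\|^2\operatorname{d}t+\int_0^t\|\phi_x\|_1^2\operatorname{d}t\leq C\|(\phi_0,\Psi_0)\|_1^2+Ce^{-C_-\beta}.
\end{align*}

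Next I would convert each occurrence of $\Psi$ to $\psi$ using \eqref{s4.09}. On the left-hand side, note that $\|(\phi,\Psi)\|_1^2\ge \|\phi\|_1^2+\|\Psi\|^2$, and the second inequality of \eqref{s4.09}, $\|\psi\|^2\le\|\Psi\|^2+C\|\phi\|_1^2$, lets me replace $\|\Psi\|^2$ by $\|\psi\|^2$ at the cost of an extra $\|\phi\|_1^2$ already present; the third inequality $\|\psi_x\|^2\le\|\Psi_x\|^2+C\|\phi_x\|_1^2$, integrated in $t$, turns $\int_0^t\|\Psi_x\|^2\operatorname{d}t$ into $\int_0^t\|\psi_x\|^2\operatorname{d}t$ modulo the already-controlled $\int_0^t\|\phi_x\|_1^2\operatorname{d}t$. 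On the right-hand side, the first inequality $\|\Psi_0\|_1^2\le\|\psi_0\|_1^2+C\|\phi_0\|_2^2$ converts $\|(\phi_0,\Psi_0)\|_1^2$ into $C\|\phi_0\|_2^2+\|\psi_0\|_1^2$. Collecting these substitutions yields exactly the claimed bound.

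The only point demanding care is the bookkeeping of constants: one must check that the extra $\|\phi\|_1^2$ and $\int_0^t\|\phi_x\|_1^2\operatorname{d}t$ contributions generated by \eqref{s4.09} are of the same type as terms already appearing on each side, so that they are absorbed by the common right-hand side. This is immediate, and there is no genuine obstacle; the substance of the estimate resides entirely in Lemmas \ref{lemma4.3}--\ref{lemma4.5}, with Lemma \ref{lemma4.6} serving merely as their reformulation in the physical perturbation variables.
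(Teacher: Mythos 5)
Your proposal is correct and follows exactly the paper's own route: the paper likewise combines Lemmas \ref{lemma4.3}--\ref{lemma4.5} into the intermediate estimate $\|(\phi,\Psi)\|_{1}^2(t)+\int_0^t\|\Psi_x\|^2\operatorname{d}t+\int_0^t\|\phi_x\|_{1}^2\operatorname{d}t\leq C\|(\phi_0,\Psi_0)\|_{1}^2+Ce^{-C_-\beta}$ and then rewrites it in the variables $(\phi,\psi)$ via \eqref{s4.09}. Your write-up merely makes explicit the bookkeeping that the paper leaves implicit, so there is nothing to correct.
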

\subsection{High Order Estimate.}
We turn to the original equation \eqref{3.3} to study the higher order estimates.
\begin{lma}\label{lemma4.7}
Under the same assumptions of Proposition \ref{prposition3.1}, it holds that
\begin{align}\label{4.25}
\begin{split}
&  \| \psi_{x} \|  ^2 (t)  +\int_0^t  \| \psi_{xx}\| ^2  \operatorname{d}t\leq C_{}\|     \phi_0 \|_{2}^2 +C_{}\| \psi_0 \|_{ 1}^2 + C_{} e^{-C_-\beta}.
\end{split}
\end{align}
\end{lma}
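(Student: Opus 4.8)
The plan is to derive the estimate directly from the original reformulated system $\eqref{3.3}$ using the multiplier $-\psi_{xx}$, which is the natural choice that produces both $\|\psi_x\|^2$ and the dissipation $\|\psi_{xx}\|^2$ without raising the differential order beyond what the space $\mathbb{X}_\delta$ controls. Concretely, I would multiply $\eqref{3.3}_2$ by $-\psi_{xx}$ and integrate over $\mathbb{R}\times[0,t]$. The term $-\psi_t\psi_{xx}$ integrates by parts (the boundary contributions vanish since $\psi_x=u-U$ and the perturbation decay at $\pm\infty$) to give $\frac{1}{2}\frac{d}{dt}\|\psi_x\|^2$, while the viscous term contributes $\int\psi_{xx}^2/V^{\alpha+1}\ge c\|\psi_{xx}\|^2$ because $V$ is bounded above and below by $\eqref{4.1}$. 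Thus the left-hand side furnishes $\frac12\|\psi_x\|^2(t)+c\int_0^t\|\psi_{xx}\|^2\,dt$, and it remains to control the three source terms on the right.

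The linear coupling $\int_0^t\int f(V,U_x)\phi_x\psi_{xx}\,dx\,dt$ is handled by Cauchy's inequality: since $f$ is a bounded smooth function of the profile, it is dominated by $\varepsilon\int_0^t\|\psi_{xx}\|^2\,dt + C_\varepsilon\int_0^t\|\phi_x\|^2\,dt$. For the genuinely nonlinear term I would invoke Lemma \ref{lemma4.1}, namely $|F|\le C(\phi_x^2+|\phi_x\psi_{xx}|)$, so that $\int|F\psi_{xx}|\,dx\le C\int\phi_x^2|\psi_{xx}|\,dx + C\int|\phi_x|\psi_{xx}^2\,dx$. The cubic piece is absorbed using the a priori smallness $\|\phi_x\|_{L^\infty}\le\delta$, giving a bound $\le C\delta\|\psi_{xx}\|^2$, and the remaining piece is treated by the interpolation $\|\phi_x\|_{L^4}^2\le\|\phi_x\|_{L^\infty}\|\phi_x\|$, which yields $\le\varepsilon\|\psi_{xx}\|^2+C_\varepsilon\delta^2\|\phi_x\|^2$. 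Finally the interaction term is bounded via Lemma \ref{lemma4.2}: $\int|W\psi_{xx}|\,dx\le\varepsilon\|\psi_{xx}\|^2+C_\varepsilon\|W\|^2$, and $\int_0^t\|W\|^2\,dt\le Ce^{-C_-\beta}$ by the exponential decay there.

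Collecting these estimates and choosing $\varepsilon$ and then $\delta$ sufficiently small lets all the $\|\psi_{xx}\|^2$ contributions on the right be absorbed into the dissipative term on the left, leaving $\|\psi_x\|^2(t)+\int_0^t\|\psi_{xx}\|^2\,dt\le C\|\psi_{0x}\|^2+C\int_0^t\|\phi_x\|^2\,dt+Ce^{-C_-\beta}$. The surviving integral $\int_0^t\|\phi_x\|^2\,dt$ is already dominated by the right-hand side of the low-order estimate Lemma \ref{lemma4.6} (indeed $\int_0^t\|\phi_x\|_1^2\,dt$ is controlled there), and $\|\psi_{0x}\|^2\le\|\psi_0\|_1^2$, which together deliver $\eqref{4.25}$.

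The step I expect to be the most delicate is the treatment of the nonlinear source $F$: because $F$ carries the second-order quantity $\psi_{xx}/V^{\alpha+1}$, pairing it against $\psi_{xx}$ produces the cubic term $\int|\phi_x|\psi_{xx}^2$, which has exactly the same order as the dissipation and can only be controlled through the a priori $L^\infty$ bound $\|\phi_x\|_{L^\infty}\le\delta$. Consequently the argument closes only because the low-order bounds of Lemma \ref{lemma4.6} are available to dispose of $\int_0^t\|\phi_x\|^2\,dt$ and because $\delta$ may be taken small; absent the prior low-order control, this high-order estimate would not be self-contained.
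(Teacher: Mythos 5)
Your proposal is correct and coincides essentially with the paper's own proof: the same multiplier $-\psi_{xx}$ applied to $\eqref{3.3}_2$, the same three source terms ($W$ via Lemma \ref{lemma4.2}, the coupling $f(V,U_x)\phi_x\psi_{xx}$ via Cauchy, and $F$ via the bound $\eqref{s4.08}_1$ with the smallness $\delta$), closed by absorbing the dissipation and invoking Lemma \ref{lemma4.6} for $\int_0^t\|\phi_x\|^2\,\operatorname{d}t$. The only cosmetic difference is your use of the $L^4$-interpolation for the term $\int\phi_x^2|\psi_{xx}|$, where the paper bounds it directly by $C\delta(\|\phi_x\|^2+\|\psi_{xx}\|^2)$ using $\|\phi_x\|_{L^\infty}\le\delta$.
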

\begin{proof}  Multiplying   $(\ref{3.3})_{2}$ by $-\psi_{x x}$,  integrating the result with  respect to $t$ and $x$ over $ [0,t]\times \mathbb{R} $ gives
\begin{align}\label{4.26}
\begin{split}
&   \frac{1}{2}\| \psi_{x}\|^{2}  (t)
+\int_{0}^{t}\int_{-\infty}^{\infty} \frac{{\psi_{x x}^{2}}}{V^{\alpha+1}} \operatorname{d}x \operatorname{d}t \\
=&\frac{1}{2}\| \psi_{0x}\|^{2} +\int_{0}^{t}\int_{-\infty}^{\infty} W \psi_{x x} \operatorname{d}x \operatorname{d}t\\
&-\int_{0}^{t}\int_{-\infty}^{\infty} f(V,U_x) \phi_{x} \psi_{x x} \operatorname{d}x \operatorname{d}t   -\int_{0}^{t}\int_{-\infty}^{\infty} F \psi_{x x} \operatorname{d}x \operatorname{d}t
   \\
=:&\frac{1}{2}\| \psi_{0x}\|^{2} +\sum_{i=1}^3 M_i.
\end{split}
\end{align}
Making use of Lemma \ref{lemma4.2}, we have
\begin{align} \label{4.27}
\begin{split}
M_{1}  &\leq \varepsilon \int_{0}^{t} \|\psi_{x x}\|^{2}\operatorname{  d}t+ C_{\varepsilon}\int_{0}^{t}  \|W\|^{2}\operatorname{  d}t. \\
 &\leq \varepsilon \int_{0}^{t} \|\psi_{x x}\|^{2}\operatorname{  d}t+  C_{} e^{-C_-\beta}.
\end{split}
\end{align}
The Cauchy inequality implies that
\begin{align}\label{4.28}
M_{2}  \leq \varepsilon \int_{0}^{t} \|\psi_{x x}\|^{2}\operatorname{  d}t+ C_{\varepsilon}\int_{0}^{t}  \|\phi_{x}\|^{2}\operatorname{  d}t.
\end{align}
By $(\ref{s4.08})_{1}$ and the Sobolev inequality, yields
\begin{align} \label{4.29}
\begin{split}
M_{3} &\leq C \int_{0}^{t}\int_{-\infty}^{\infty}\left(\left|\phi_{x}\right|^{2}  +\left|\phi_{x}\right|\left|\psi_{x x}\right|\right)\left|\psi_{x x}\right| \operatorname{d} x \operatorname{d}t \\
&\leq C \int_{0}^{t}\int_{-\infty}^{\infty}\left|\phi_{x}\right|\left(\left|\phi_{x}\right|^{2}+\left|\psi_{x x}\right|^{2}\right) \operatorname{d}x\operatorname{d}t   \\
&\leq C   \delta \int_{0}^{t} \left(\left\|\phi_{x}\right\|^{2}+\left\|\psi_{x x}\right\|^{2}\right) \operatorname{  d}t.
\end{split}
\end{align}
Substituting (\ref{4.27})-(\ref{4.29})  into $(\ref{4.26}) $ and using Lemma \ref{lemma4.6}, we obtain (\ref{4.25}).
\end{proof}
\begin{lma}\label{lemma4.8}
Under the same assumptions of Proposition \ref{prposition3.1}, it holds that
\begin{align}\label{4.30}
\begin{split}
&\| \phi_{xx} \| ^2   +\int_0^t  \| \phi_{xx}\|_{}^2  \operatorname{d}t \leq C_{}\|     \phi_0 \|_{2}^2 +C_{}\| \psi_0 \|_{ 1}^2 + C_{} e^{-C_-\beta}+ C\delta \int_{0}^{t}\left\|\psi_{xx x} \right\|^2  \operatorname{d}t.
\end{split}
\end{align}
\end{lma}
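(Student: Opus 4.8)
The quantity genuinely new in $(\ref{4.30})$ is the pointwise-in-time bound $\|\phi_{xx}\|^2(t)$, the time integral $\int_0^t\|\phi_{xx}\|^2\,dt$ being already contained in Lemma \ref{lemma4.6}; both will be produced at once by a weighted energy estimate on the differentiated momentum equation. First I would use $(\ref{3.3})_1$ in the form $\psi_{xx}=\phi_{xt}$ to rewrite $(\ref{3.3})_2$ as $\psi_t-f(V,U_x)\phi_x-\frac{\phi_{xt}}{V^{\alpha+1}}=F-W$, and then differentiate once in $x$ to expose the dissipative coefficient $f$ in front of $\phi_{xx}$:
\begin{align*}
\psi_{tx}-f\phi_{xx}-f_x\phi_x-\frac{\phi_{xxt}}{V^{\alpha+1}}-\phi_{xt}\Big(\frac{1}{V^{\alpha+1}}\Big)_x=(F-W)_x.
\end{align*}
Multiplying by $\phi_{xx}$ and integrating over $[0,t]\times\mathbb{R}$, the term $-\int\frac{\phi_{xxt}\phi_{xx}}{V^{\alpha+1}}$ becomes $-\frac{d}{dt}\int\frac{\phi_{xx}^2}{2V^{\alpha+1}}$ plus the lower-order piece $\int\frac{\phi_{xx}^2}{2}\partial_t(\frac{1}{V^{\alpha+1}})$, while $-f\phi_{xx}$ times $\phi_{xx}$ furnishes, after being moved to the left, the coercive dissipation $\int_0^t\int f\phi_{xx}^2$. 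Positivity of $f$ is crucial here and follows from $(\ref{3.5})$ together with $U_x=U_{1x}+U_{2x}\le0$ (Lemma \ref{lemma2.1}), giving $f\ge-p'(V)\ge c>0$.

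It remains to absorb the forcing terms. The second-order-in-time contribution $\int_0^t\int\psi_{tx}\phi_{xx}$ I would integrate by parts in $t$: using $\phi_{xxt}=\psi_{xxx}$ one gets $\int\psi_x\phi_{xx}\big|_0^t+\int_0^t\int\psi_{xx}^2$, where the time-boundary term is split by Cauchy's inequality into $\varepsilon\|\phi_{xx}\|^2(t)$ (absorbed on the left), $C_\varepsilon\|\psi_x\|^2(t)$ and initial data controlled by Lemma \ref{lemma4.7}, and the remaining $\int_0^t\|\psi_{xx}\|^2$ is likewise controlled by Lemma \ref{lemma4.7}. The profile-coefficient terms $\int f_x\phi_x\phi_{xx}$, $\int\phi_{xt}(\frac{1}{V^{\alpha+1}})_x\phi_{xx}$ and $\int\frac{\phi_{xx}^2}{2}\partial_t(\frac{1}{V^{\alpha+1}})$ carry bounded coefficients built from $V_x$ and $V_t=U_x$, so by Cauchy's inequality they are dominated by $\varepsilon\int_0^t\|\phi_{xx}\|^2$ plus $C\int_0^t(\|\phi_x\|^2+\|\psi_{xx}\|^2)$, all handled by Lemmas \ref{lemma4.6} and \ref{lemma4.7}. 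The interaction term $\int_0^t\int W_x\phi_{xx}$ is bounded by $\varepsilon\int_0^t\|\phi_{xx}\|^2+C_\varepsilon\int_0^t\|W_x\|^2$, and Lemma \ref{lemma4.2} converts the latter into $Ce^{-C_-\beta}$.

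The main obstacle is the nonlinear term $-\int_0^t\int F_x\phi_{xx}$, estimated through the bound for $|F_x|$ in $(\ref{s4.08})$. The decisive structural point is that $\phi_{xxx}$ does \emph{not} belong to the a priori class $\mathbb{X}_{\delta}$, so in every resulting trilinear integral the $L^\infty$ norm must fall on a first-order factor $\phi_x$ or $\psi_x$ (which is $O(\delta)$ by Sobolev embedding), or be routed through $\psi$, whose third derivative lies in $L^2_tL^2_x$, via Gagliardo--Nirenberg inequalities such as $\|\psi_{xx}\|_{L^\infty}\le C\|\psi_{xx}\|^{1/2}\|\psi_{xxx}\|^{1/2}$. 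The worst piece, from $|\psi_{xxx}\phi_x|$, is controlled by $\int|\psi_{xxx}\phi_x\phi_{xx}|\le\|\phi_x\|_{L^\infty}\cdot\frac12(\|\psi_{xxx}\|^2+\|\phi_{xx}\|^2)$, which produces precisely the term $C\delta\int_0^t\|\psi_{xxx}\|^2$ on the right of $(\ref{4.30})$ --- this term is not absorbable here and is deferred to the subsequent highest-order estimate for $\psi_{xxx}$. The remaining pieces $|\psi_{xx}\phi_{xx}|$ and $|\phi_x\psi_{xx}|$ paired with $\phi_{xx}$ are treated the same way, each contributing either an absorbable $\varepsilon\|\phi_{xx}\|^2$ or a $\delta$-small multiple of $\|\psi_{xxx}\|^2$ and $\|\phi_{xx}\|^2$ after interpolation. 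Choosing $\varepsilon$ and then $\delta$ small enough to absorb all $\|\phi_{xx}\|^2$ contributions into the dissipation, and invoking Lemmas \ref{lemma4.6} and \ref{lemma4.7} for the lower-order integrals, yields $(\ref{4.30})$.
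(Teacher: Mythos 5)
Your proposal reproduces the paper's own argument almost step for step: the same reformulation (substituting $\psi_{xx}=\phi_{xt}$ into $(\ref{3.3})_2$, which is the paper's \eqref{4.31}), the same differentiation in $x$ with multiplier $\phi_{xx}$, the same integration by parts in $t$ on $\int\psi_{tx}\phi_{xx}$ producing $\int\psi_x\phi_{xx}\big|_0^t+\int_0^t\|\psi_{xx}\|^2$, the same use of Lemma \ref{lemma4.2} for the $W_x$ term, and the same use of $(\ref{s4.08})_2$ for $F_x$, which is indeed the sole source of the non-absorbable term $C\delta\int_0^t\|\psi_{xxx}\|^2$ in \eqref{4.30}.

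There is, however, one step whose justification as written would fail: your treatment of the quadratic term $\int_0^t\!\int\frac{\phi_{xx}^2}{2}\,\partial_t\bigl(\frac{1}{V^{\alpha+1}}\bigr)$. This is a pure square, so Cauchy's inequality cannot split it, and its coefficient
\begin{equation*}
\partial_t\Bigl(\frac{1}{V^{\alpha+1}}\Bigr)=-(\alpha+1)\frac{U_x}{V^{\alpha+2}}\ \ge\ 0
\end{equation*}
is $O(1)$, not small: the whole point of this paper is that the shock amplitudes (hence $|U_x|$) are arbitrarily large, so this term is of exactly the same size as the dissipation itself and cannot be "dominated by $\varepsilon\int_0^t\|\phi_{xx}\|^2$ plus lower-order integrals." The correct handling --- and what the paper does in \eqref{4.32}--\eqref{4.33} --- is to keep this term on the dissipative side and combine its coefficient with $f$: by \eqref{3.5} and $U_x\le 0$,
\begin{equation*}
f(V,U_x)+\frac{\alpha+1}{2}\frac{U_x}{V^{\alpha+2}}=-p'(V)-\frac{\alpha+1}{2}\frac{U_x}{V^{\alpha+2}}\ \ge\ -p'(V)\ \ge\ c>0,
\end{equation*}
i.e.\ the bad square is pointwise dominated by the dissipation with a uniformly positive surplus. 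Since you already invoked $U_x\le0$ and the structure of $f$ to get coercivity of $f$ alone, the repair is one line; but the $\varepsilon$-absorption mechanism you assigned to this term is genuinely wrong, and without the replacement just described your final absorption step does not close.
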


\begin{proof}
 Differentiating    $(\ref{3.3})_{1}$ with  respect to $x$,  using  $(\ref{3.3})_{2}, $ we have
\begin{align}\label{4.31}
\begin{split}
\frac{  \phi_{x t}}{V^{\alpha+1}}+f(V,U_x) \phi_{x}=\psi_{t}-F+W.
\end{split}
 \end{align}
Differentiating    $(\ref{4.31})$ in respect of $x$ and multiplying  the derivative  by $\phi_{x x}$,  integrating the result in respect of $t$ and $x$ over $ [0,t]\times \mathbb{R} $, using (\ref{2.3}), one has
\begin{align} \label{4.32}
\begin{split}
 &\frac{1}{2} \int_{-\infty}^{\infty}  \frac{ \phi_{x x}^{2}}{  V^{  \alpha+1  }}  \operatorname{d  }x
 +\int_{0}^{t}\int_{-\infty}^{\infty}\left(f(V,U_x)+  \frac{\alpha+1}{2} \frac{  U_x  }{ V^{\alpha+2}}\right) \phi_{x x}^{2}\operatorname{d}x \operatorname{d}t  \\
 =& \frac{1}{2} \int_{-\infty}^{\infty}  \frac{ \phi_{x x}^{2}}{  V^{  \alpha+1  }}  \Big|_{t=0}\operatorname{d  }x-\int_{-\infty}^{\infty}   \psi_{x } \phi_{x x}    \Big|_{t=0}\operatorname{d  }x+\int_{-\infty}^{\infty}   \psi_{x } \phi_{x x}   \operatorname{d  }x  \\
&+\int_{0}^{t}\int_{-\infty}^{\infty}W_{x}\phi_{x x}\operatorname{d}x\operatorname{d}t
+\int_{0}^{t}        \| \psi_{x x}\|^{2}\operatorname{    d}t-\int_{0}^{t}\int_{-\infty}^{\infty}F_{x} \phi_{x x}\operatorname{d}x\operatorname{d}t
  \\
&+(\alpha+1)\int_{0}^{t}\int_{-\infty}^{\infty}\frac{{} V_{x}}{V^{\alpha+2}} \psi_{x x} \phi_{x x}\operatorname{d}x\operatorname{d}t-\int_{0}^{t}\int_{-\infty}^{\infty}f(V,U_x)_{x} \phi_{x} \phi_{x x}\operatorname{d}x\operatorname{d}t   \\
=:&\frac{1}{2} \int_{-\infty}^{\infty}  \frac{ \phi_{x x}^{2}}{  V^{  \alpha+1  }}  \Big|_{t=0}\operatorname{d }x-\int_{-\infty}^{\infty}   \psi_{x } \phi_{x x}    \Big|_{t=0}\operatorname{d  }x+\sum_{i=1}^6 N_i.
\end{split}
\end{align}
By $U_x<0$ and (\ref{3.5}), one has
\begin{align} \label{4.33}
\begin{split}
&f(V,U_x)+  \frac{\alpha+1}{2} \frac{  U_x  }{ V^{\alpha+2}}\\
=&-p'(V )-  \frac{\alpha+1}{2} \frac{  U_x  }{ V^{\alpha+2}}\geq - \max \{p'(v_-),p'(v_+)\}>0.
\end{split}
\end{align}
The Cauchy inequality yields
\begin{align}
N_{1}\leq  \varepsilon  \|\phi_{x x}\|^{2}   +C_\varepsilon    \|\psi_{x }\|  ^{2}.
\end{align}
 Similar to (\ref{4.27}), we get
\begin{align}
N_{2}  \leq \varepsilon       \int_{0}^{t} \|\phi_{x x}\|^{2} \operatorname{  d}t    +C_\varepsilon   e^{-C_-\beta}.
\end{align}
$ N_{3}  $ can be controlled by (\ref{4.25}). Using  $(\ref{s4.08})_{2}$, and Cauchy inequality,   we have
\begin{align*}
\begin{split}
|N_{4}| \leq &  \varepsilon \int_{0}^{t}  \|\phi_{x x}\|^{2} \operatorname{  d}t  +    C_{\varepsilon}   \int_{0}^{t}\left\|F_{ x}\right\|^{2} \operatorname{  d}t. \\
\leq &  \varepsilon \int_{0}^{t}  \|\phi_{x x}\|^{2} \operatorname{  d}t  +  C_{\varepsilon}  \delta     \int_{0}^{t}  \left(\left\|\phi_{x}\right\|_{1}^{2}+\left\|\psi_{x}\right\|_{2}^{2}\right)\operatorname{  d}t.
\end{split}
\end{align*}
The Cauchy inequality yields
\begin{align}
\begin{split}
|N_{5}| \leq C & \int_{0}^{t}\int_{-\infty}^{\infty} \left|\frac{{} V_{x}}{V^{\alpha+2}} \psi_{x x} \phi_{x x}\right| \operatorname{d}x \operatorname{d}t\leq \varepsilon \int_{0}^{t}  \|\phi_{x x}\|^{2} \operatorname{  d}t  +    C_{\varepsilon}   \int_{0}^{t}\left\|\psi_{x x}\right\|^{2} \operatorname{  d}t.
\end{split}
\end{align}
With the help of
$$f(V,U_x)_{x}= -p^{\prime\prime}(V)V_{x}    - (\alpha+1)\frac{{} U_{xx}}{V^{\alpha+2}}   + (\alpha+1)(\alpha+2)\frac{{} U_{x}}{V^{\alpha+3}}V_{x}   < C, $$
one gets
\begin{align} \label{4.38}
\begin{split}
|N_{6}| \leq &  \varepsilon \int_{0}^{t}  \|\phi_{x x}\|^{2} \operatorname{  d}t  +    C_{\varepsilon}   \int_{0}^{t}\left\|\phi_{ x}\right\|^{2} \operatorname{  d}t. \\
\end{split}
\end{align}
Choosing  $\varepsilon$  small,  substituting  (\ref{4.33})-(\ref{4.38})  into (\ref{4.32})   and  using  Lemma \ref{lemma4.6}, Lemma \ref{lemma4.7},    we have (\ref{4.30}).
\end{proof}

On the other hand, differentiating the second equation of (\ref{3.3}) with respect to $x$, multiplying the derivative by $-\psi_{x x x}$, integrating the resulting equality over $[0, \infty)  \mathbb{\times}[0, t]$, using Lemma \ref{lemma4.6}-Lemma \ref{lemma4.8}, we can get the highest order estimate in the same way, which is listed as follows  and the proof is omitted.

\begin{lma}\label{lemma4.9}
Under the same assumptions of Proposition \ref{prposition3.1}, it holds that
\begin{align}\label{4.39}
\begin{split}
&  \| \psi_{xx} (t)\| ^2+\int_0^t  \| \psi_{xxx}\| ^2  \operatorname{d}t \leq   C_{}\|   (  \phi_0 , \psi_0 )\|_{ 2}^2 + C_{} e^{-C_-\beta} .
\end{split}
\end{align}
\end{lma}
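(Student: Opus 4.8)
The plan is to carry out, one order higher, the same differentiate-and-multiply scheme already used for Lemma \ref{lemma4.7} and Lemma \ref{lemma4.8}. First I differentiate $(\ref{3.3})_2$ once in $x$, which gives $\psi_{xt}=f(V,U_x)_x\phi_x+f(V,U_x)\phi_{xx}+\left(\f{\psi_{xx}}{V^{\alpha+1}}\right)_x+F_x-W_x$. Multiplying this identity by $-\psi_{xxx}$ and integrating over $\R\times[0,t]$, the leading term $-\psi_{xt}\psi_{xxx}$ integrates by parts in $x$ to $\f12\f{d}{dt}\psi_{xx}^2$ (the boundary terms at $\pm\infty$ vanish by the decay built into $\mathbb{X}_\delta$), while the viscous term produces the good dissipation $\int_0^t\int_\R\f{\psi_{xxx}^2}{V^{\alpha+1}}\,dx\,dt$. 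This yields an identity of the form
\be
\f12\|\psi_{xx}(t)\|^2+\int_0^t\int_\R\f{\psi_{xxx}^2}{V^{\alpha+1}}\,dx\,dt=\f12\|\psi_{0xx}\|^2+\sum_i R_i,
\en
where the remainders $R_i$ collect the contributions of $f(V,U_x)_x\phi_x\psi_{xxx}$, $f(V,U_x)\phi_{xx}\psi_{xxx}$, the lower-order piece $(\alpha+1)\f{V_x\psi_{xx}}{V^{\alpha+2}}\psi_{xxx}$ coming from $\left(\f{\psi_{xx}}{V^{\alpha+1}}\right)_x$, and the two forcing terms $F_x\psi_{xxx}$ and $W_x\psi_{xxx}$.

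Next I estimate each $R_i$ by Young's inequality, peeling off a small multiple $\varepsilon\|\psi_{xxx}\|^2$ to be absorbed by the dissipation and bounding the remaining factor by quantities already controlled. Since $f(V,U_x)$, $f(V,U_x)_x$ and $V_x/V^{\alpha+2}$ are all bounded on the range of $V$, the first three remainders are dominated by $\varepsilon\int_0^t\|\psi_{xxx}\|^2+C_\varepsilon\int_0^t(\|\phi_x\|^2+\|\phi_{xx}\|^2+\|\psi_{xx}\|^2)\,dt$. For the $W_x$ term I invoke Lemma \ref{lemma4.2} to obtain $\varepsilon\int_0^t\|\psi_{xxx}\|^2+C_\varepsilon\int_0^t\|W_x\|^2\le\varepsilon\int_0^t\|\psi_{xxx}\|^2+C_\varepsilon e^{-C_-\beta}$. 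For the $F_x$ term I use $(\ref{s4.08})_2$ together with the uniform smallness $\|(\phi_x,\phi_{xx},\psi_{xx})\|_{L^\infty}\le C\delta$ furnished by the a priori bound (\ref{4.1}); the only delicate contribution is the $|\psi_{xxx}\phi_x|$ piece, whose square is bounded by $\|\phi_x\|_{L^\infty}^2\|\psi_{xxx}\|^2\le C\delta^2\|\psi_{xxx}\|^2$, so after multiplication by $C_\varepsilon$ it remains absorbable provided $\delta$ is small relative to $\varepsilon$.

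Finally I combine. Choosing $\varepsilon$ small fixes the constants $C_\varepsilon$, and then choosing $\delta$ small makes the total coefficient of $\int_0^t\|\psi_{xxx}\|^2$ on the right strictly smaller than the dissipation constant, so every $\psi_{xxx}$-term is absorbed into the left-hand side. The surviving right-hand side is $\f12\|\psi_{0xx}\|^2+C_\varepsilon e^{-C_-\beta}+C_\varepsilon\int_0^t(\|\phi_x\|^2+\|\phi_{xx}\|^2+\|\psi_{xx}\|^2)\,dt$, and each of these time integrals is already controlled: $\int_0^t\|\phi_x\|^2$ and $\int_0^t\|\psi_{xx}\|^2$ by Lemma \ref{lemma4.6} and Lemma \ref{lemma4.7}, and $\int_0^t\|\phi_{xx}\|^2$ by Lemma \ref{lemma4.8}. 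I expect the main obstacle to be precisely this last coupling: Lemma \ref{lemma4.8} controls $\int_0^t\|\phi_{xx}\|^2$ only up to an extra term $C\delta\int_0^t\|\psi_{xxx}\|^2$, which feeds back into the very quantity I am trying to close. Substituting Lemma \ref{lemma4.8} therefore reintroduces a $C_\varepsilon\delta\int_0^t\|\psi_{xxx}\|^2$ that must be absorbed a second time; this forces the strict order of the choices (fix $\varepsilon$ first, then take $\delta$ so small that $\varepsilon+C_\varepsilon(\delta+\delta^2)$ beats the dissipation) and is the one place where the bookkeeping must be done carefully rather than routinely. Once this is arranged, (\ref{4.39}) follows.
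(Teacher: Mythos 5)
Your proposal follows exactly the route the paper indicates for this lemma (the paper omits the details, stating only: differentiate $(\ref{3.3})_2$ in $x$, multiply by $-\psi_{xxx}$, integrate over $\mathbb{R}\times[0,t]$, and invoke Lemmas \ref{lemma4.6}--\ref{lemma4.8}), and your execution of that scheme is correct. Your observation that the $C\delta\int_0^t\|\psi_{xxx}\|^2\,\mathrm{d}t$ term from Lemma \ref{lemma4.8} feeds back and must be re-absorbed by fixing $\varepsilon$ first and then taking $\delta$ small is precisely the bookkeeping the paper leaves implicit, so your write-up is a faithful completion of the paper's own argument.
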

Finally, Proposition \ref{prposition3.1} is obtained by Lemma \ref{lemma4.5}-Lemma \ref{lemma4.9}.
\section{Proof of Theorem \ref{theorem}}\label{section5}
It is straightforward to imply   (\ref{2.13}) from  Lemma  \ref{lemma3.1}. It remains to show (\ref{2.14}). The following useful lemma will be used.
\begin{lma}(\cite{mn1985})\label{lemma5.1}
Assume that the function $f(t) \geq 0\in \mathbb{L}^1(0, +\infty) \cap  \mathbb{BV}(0, +\infty) $, then it holds that $f(t) \rightarrow0$ as $t \rightarrow \infty$.
\end{lma}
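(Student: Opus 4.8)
The plan is to use the two hypotheses in complementary roles: the bounded-variation assumption will produce a limit for $f$ at infinity, while the integrability assumption will pin that limit down to zero. Since the statement is a standard real-analysis fact (the classical lemma of \cite{mn1985}), the proof should be short, and the only point requiring genuine care is the existence of $\lim_{t\to\infty}f(t)$.

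First I would invoke the Jordan decomposition of a function of bounded variation: since $f\in\mathbb{BV}(0,+\infty)$, we may write $f=g-h$ with $g,h$ nondecreasing and bounded on $(0,+\infty)$. Each of $g$ and $h$, being monotone and bounded, has a finite limit as $t\to+\infty$, and therefore $L:=\lim_{t\to+\infty}f(t)$ exists and is finite; moreover $L\geq 0$ because $f\geq 0$ throughout. The second step is to rule out $L>0$. If $L>0$, then there is some $T>0$ with $f(t)\geq L/2$ for all $t\geq T$, whence $\int_{T}^{+\infty}f(t)\,\operatorname{d}t\geq\int_{T}^{+\infty}(L/2)\,\operatorname{d}t=+\infty$, contradicting $f\in\mathbb{L}^1(0,+\infty)$. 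Hence $L=0$, that is, $f(t)\to 0$ as $t\to+\infty$, which is the desired conclusion.

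An equivalent, slightly more hands-on route avoids the decomposition: integrability of the nonnegative $f$ yields a sequence $t_n\to+\infty$ with $f(t_n)\to 0$ (otherwise $f$ would be bounded below by a positive constant on a tail, destroying integrability), and bounded variation gives $|f(t)-f(t_n)|\leq\mathrm{Var}(f;\,t_n,+\infty)$ for every $t\geq t_n$, whose right-hand side tends to $0$ as $n\to\infty$ since the total variation over $(0,+\infty)$ is finite. Letting $n\to\infty$ then forces $\limsup_{t\to\infty}f(t)=0$, and nonnegativity finishes the argument. Either way, the crux—and the only step where one must be attentive—is justifying that a bounded-variation function possesses a limit at infinity; the monotone decomposition in the first route supplies exactly this, so no further estimate is needed.
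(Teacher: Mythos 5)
Your proof is correct. Note, however, that the paper offers no proof of this lemma at all: it is stated as a quoted result from \cite{mn1985}, so there is no internal argument to compare against, and your write-up actually supplies what the paper omits. Both of your routes are sound. The first rests on extending the Jordan decomposition to the half-line, and that extension is legitimate: take $g(t)=\mathrm{Var}\bigl(f;(0,t]\bigr)$, which is nondecreasing and bounded by the (finite) total variation, and $h=g-f$, which is nondecreasing because $f(t)-f(s)\le \mathrm{Var}\bigl(f;(s,t]\bigr)$ for $s<t$ and bounded because $f$ itself is bounded on $(0,+\infty)$; then $g$ and $h$ have finite limits at infinity, the limit $L$ of $f$ exists, and integrability of a nonnegative function rules out $L>0$. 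Your second route is the more elementary one and is essentially the standard argument for this classical fact: integrability forces $\liminf_{t\to\infty}f(t)=0$, hence a sequence $t_n\to\infty$ with $f(t_n)\to0$, and finiteness of the total variation makes the tail variation $\mathrm{Var}(f;t_n,+\infty)$ tend to zero, which transfers the decay along $t_n$ to all large $t$; this version has the advantage of never invoking a decomposition and works verbatim for discontinuous $f$ (which a BV function may well be). Either argument is complete and closes the gap the paper leaves by citation.
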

Let us turn to the system   (\ref{3.3}). Differentiating  (\ref{3.3})$_1$   with respect to $x$, multiplying the
resulting equation by $\phi_{  x}$ and integrating it with respect to $ { x}$ on $(-\infty,\infty)$, we have
\begin{equation*}
 \left|\frac{\operatorname{d}}{\operatorname{d}t}\left(\|\phi_{ x}\|^{2}\right)\right|\leq C(\|\phi_{x}\|^{2} +\|\psi_{xx}\|^{2})  .
\end{equation*}
With the aid of Lemma  {\ref{lemma3.1}}, we have
\begin{equation*}
\int_{-\infty}^{\infty} \left|\frac{\operatorname{d}}{\operatorname{d}t}\left(\|\phi_{xx}\|^{2}\right)\right| \operatorname{d}t \leq C,
\end{equation*}
which implies $\|\phi_{ x}\|^{2}\in \mathbb{L}^1(0, +\infty) \cap  \mathbb{BV}(0, +\infty)$. By Lemma \ref{lemma5.1}, we have
 \begin{equation*}
    \|\phi_{ x}\|\rightarrow0 \quad   \text{as} \quad   t\rightarrow+\infty.
 \end{equation*}
Since $\|\phi_{xx}\|$ is bounded, the Sobolev inequality implies that
 \begin{eqnarray*}
 \|{}{v}-V\|_{\infty}^{2}=\|\phi_{x}\|_{\infty}^{2} \leq 2\| \phi_{x}(t)\|_{}   \| \phi_{xx}(t)\|_{} \rightarrow  0.
\end{eqnarray*}
 Similarly, we have
 \begin{eqnarray*}
 \|{}{u}-U\|_{\infty}^{2}=\|\psi_{x}\|_{\infty}^{2} \leq 2\| \psi_{x}(t)\|_{}   \| \psi_{xx}(t)\|_{} \rightarrow  0.
\end{eqnarray*}
Therefore, the proof of Theorem \ref{theorem} is completed.

\end{document}